\renewcommand{\vec}[1]{ {\mathbf{#1} }  }
\newcommand{\im}{{\ensuremath{\mathrm{Im\,}}}}
\DeclareMathOperator{\Dom}{Dom}
\DeclareMathOperator{\dom}{Dom}
\DeclareMathOperator{\Ran}{\mathrm{Ran}}
\DeclareMathOperator{\Kern}{\mathrm{Ker}}
\DeclareMathOperator{\spec}{\mathrm{spec}}
\DeclareMathOperator{\trace}{\mathrm{tr}}
\DeclareMathOperator*{\wlim}{w-lim}
\newcommand{\linspan}{\mathrm{lin\ span}}
\newcommand{\fH}{\mathfrak{H}}
\newcommand{\fK}{\mathfrak{K}}
\newcommand{\cB}{\mathcal{B}}
\newcommand{\cL}{\mathcal{L}}
\newcommand{\ii}{\mathrm i}
\newcommand{\vA}{\vec{A}}
\newcommand{\vB}{\vec{B}}
\newcommand{\vV}{\vec{V}}
\newcommand{\eps}{\varepsilon}
\newcommand{\limeps}{\lim_{\eps \rightarrow 0^+} }
\newcommand{\normbig}[1]{\left\|#1\right\|}
\newcommand{\sca}[1]{\langle #1 \rangle}
\newcommand{\abs}[1]{\left| #1 \right|} 
\newcommand{\NN}{\mathbb{N}}
\newcommand{\RR}{\mathbb{R}}
\newcommand{\CC}{\mathbb{C}}
\newcommand{\EE}{\mathsf{E}}
\newcommand{\ie}{\mbox{i.\,e.}\xspace}
\newtheorem{theorem}{Theorem}[section]
\newtheorem{lemma}[theorem]{Lemma}
\newtheorem{prop}[theorem]{Proposition}
\newtheorem{corollary}[theorem]{Corollary}
\newtheorem{hypo}[theorem]{Hypothesis}
\theoremstyle{definition}
\newtheorem{definition}[theorem]{Definition}
\newtheorem{example}[theorem]{Example}
\newtheorem{remark}[theorem]{Remark}
\title[Finite rank perturbations and solutions to the Riccati equation]{Finite rank perturbations and solutions to the operator Riccati equation}
\subjclass{Primary 47A62, 47A55; Secondary 47B15}
\keywords{Operator Riccati equation, singular spectrum, Herglotz functions}
\author[J.\ P.\ Gro{\ss}mann]{Julian P. Gro{\ss}mann}
\address{J. P.~Gro{\ss}mann,
Department Mathematik,
Friedrich-Alexander-Uni\-ver\-si\-t\"{a}t \linebreak[4] Er\-lang\-en-N\"{u}rnberg,
Cauerstr. 11,
D-91058 Erlangen,
Germany}
\email{jp-g@gmx.de}
\begin{document}


%

\begin{abstract}
We consider an off-diagonal self-adjoint finite rank perturbation of a self-adjoint operator in a
complex separable Hilbert space $\fH_0 \oplus \fH_1$, where $\fH_1$ is finite dimensional. We describe the singular spectrum of the perturbed
operator and establish a connection with solutions to the operator Riccati equation.
In particular, we prove existence results for solutions in the case where the whole Hilbert space is finite dimensional.
\end{abstract}

\maketitle

\section{Introduction}
In the present article we analyse a special class of finite rank perturbations of a self-adjoint operator on a complex
separable Hilbert space $\fH$ and show how this is related to the existence of solutions
to the so-called \emph{operator Riccati equation}. This generalises results by Kostrykin and Makarov in \cite{Kostrykin.2005} where they considered rank one perturbations.

Let $\vA$ be a bounded self-adjoint operator on the Hilbert space $\fH$
	and $\fH_0 \subset \fH $  be a closed $\vA$-invariant subspace.
We choose $\fH_1 = \fH_0^{\perp} $ and define the self-adjoint operators
	$A_i := \vA|_{\fH_i} $ for $i =0,1$. 
	Assume that the perturbation $\vV: \fH \rightarrow \fH $
	is off-diagonal with respect to the orthogonal decomposition
	$ \fH = \fH_0 \oplus \fH_1$, \ie
	 $\Ran( \vV|_{\fH_0} ) \subset \fH_1 $ and
		$\Ran( \vV|_{\fH_1} ) \subset \fH_0 $.
	Consider then the
	perturbed
	self-adjoint operator
	\begin{equation*} \vB := \vA + \vV = \begin{pmatrix}
			A_0 & V \\ V^\ast & A_1
		\end{pmatrix}  
		~~\textnormal{with}~ \vV= \begin{pmatrix}
		0 & V \\ V^\ast & 0
		\end{pmatrix}
		\,,
		\end{equation*} 
	where $V : \fH_1 \rightarrow \fH_0$ is a bounded operator.
We will study the \emph{operator Riccati equation} associated with the operators above
		\begin{equation}
			A_1 X - X A_0 - X V X + V^\ast = 0 \label{eq:ric10}
			\,,
		\end{equation}
where the \emph{solution} $X$ is a densely defined operator from $\fH_0$ to $\fH_1$.
The name bears analogy to the familiar \emph{Riccati equation} as an ordinary differential equation and honours the Italian mathematician \emph{Jacopo Francesco Riccati} (1676 -- 1754).

It is well-known (see, e.g., \cite[Theorem 4.4]{Kostrykin.2003})
that the graph of a densely defined operator
$X: \fH_0 \supset \Dom(X) \rightarrow \fH_1$ is invariant for $\vB$
if and only if $X$ is a strong solution to the Riccati equation \eqref{eq:ric10}
in the sense of Definition \ref{def:strongric} below.

There are sufficient conditions which assure the existence of a
solution to the Riccati equation. 
If the spectra of the self-adjoint operators $A_0$ and $A_1$ are separated and the
operator norm of the perturbation $V$ is sufficiently small, then there is a bounded solution to \eqref{eq:ric10}. For details about the smallness of the perturbation see 
\cite[Theorem \nolinebreak 3.3]{Makarov.2013} in combination with \cite{Kostrykin.2003}.

If the spectra of $A_0$ and $A_1$ are even
subordinated, \ie
	\begin{equation*}
		\sup \spec(A_0) \leq \inf \spec(A_1) \, ,
	\end{equation*}
then a contractive solutions to the Riccati equation exists regardless of the norm of the perturbation $V$, see 
\cite{Kostrykin.2004}.
Similar results can be found in 
\cite{Adamyan.2001} and \cite{Kostrykin.2013}.

Note that in this work it is not assumed that the spectra
of the operators $A_0$ and $A_1$ are separated. Instead,
we require that the Hilbert space $\fH_1$
is finite dimensional. Under this assumption, we prove existence results for the Riccati equation.
We are mainly interested in bounded solutions, but we also prove
some statements about unbounded solutions.
In particular, all these results hold under the assumption that the whole Hilbert space
$\fH$ is finite dimensional.

Our main results are the following theorem and the deduced corollary.

\begin{theorem} \label{theo:main}
 Assume that $\fH_1$ is finite dimensional with $n := \dim \fH_1$, and suppose 
 that $\Ran V$ is a cyclic generating subspace
 for the operator $A_0$, \ie
 		\begin{equation*}
		\overline{ \linspan \Big\{ A_0^k v ~\big|~ k \in \NN_0, \, v \in \Ran V \Big\}} = \fH_0
		\,.
		\end{equation*}
 Then one has:
 \begin{enumerate}[(i)]
 		\item The multiplicity of the spectrum of $\vB$ is at most $n$. If there is an
 				eigenvalue of multiplicity $n$, then there is a bounded solution to the
 				Riccati equation \eqref{eq:ric10}.
 		\item \label{item:theo:main:(b)}
 		Assume that $\vB$ has at least $n$ eigenvalues
 		outside the point spectrum of $A_0$,
 		counted with multiplicities,
 		and let $U$ be the space spanned by the associated eigenvectors.
 		Furthermore, suppose that
 		\begin{equation*}
 			P_{\fH_1} U =  \fH_1 \,,
 		\end{equation*}
 	    where $P_{\fH_1} : \fH \rightarrow \fH$
 		is the orthogonal projection onto $\fH_1$.
 		Then the Riccati equation \eqref{eq:ric10} has a bounded solution.
 \end{enumerate}
\end{theorem}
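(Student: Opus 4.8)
The plan is to exploit the correspondence, recalled above, between bounded solutions $X\colon\fH_0\to\fH_1$ of \eqref{eq:ric10} and bounded $\vB$-invariant graph subspaces $\cG(X)=\{f\oplus Xf : f\in\fH_0\}$. Since $\vB$ is self-adjoint, the orthogonal complement of any $\vB$-invariant subspace is again $\vB$-invariant, so rather than producing $\cG(X)$ directly I would produce its complement. The structural point I would isolate is this: a closed subspace $W\subset\fH$ is a \emph{graph over $\fH_1$}, meaning that $P_{\fH_1}|_W\colon W\to\fH_1$ is a bijection and hence $W=\{Yg\oplus g : g\in\fH_1\}$ for a bounded $Y\colon\fH_1\to\fH_0$, if and only if its orthogonal complement is the graph over $\fH_0$ of the bounded operator $X:=-Y^\ast\colon\fH_0\to\fH_1$. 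Indeed, $h=h_0\oplus h_1\perp W$ forces $h_1=-Y^\ast h_0$, so $W^\perp=\{h_0\oplus(-Y^\ast h_0) : h_0\in\fH_0\}=\cG(X)$. Thus it suffices to exhibit a $\vB$-invariant $W$ that is a graph over $\fH_1$; then $W^\perp=\cG(X)$ is a bounded $\vB$-invariant graph subspace and $X$ solves \eqref{eq:ric10}.

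To construct such a $W$ I would use the eigenvectors spanning $U$. By hypothesis $P_{\fH_1}U=\fH_1$ and $\dim\fH_1=n$, so from a system of eigenvectors spanning $U$ one can select $n$ of them, say $\psi_1,\dots,\psi_n$, whose projections $P_{\fH_1}\psi_1,\dots,P_{\fH_1}\psi_n$ form a basis of $\fH_1$. Put $W:=\linspan\{\psi_1,\dots,\psi_n\}$. Because each $\psi_j$ is an eigenvector of $\vB$, the space $W$ is $\vB$-invariant; because the projected vectors are linearly independent, the $\psi_j$ themselves are linearly independent, whence $\dim W=n=\dim\fH_1$ and $P_{\fH_1}|_W$ is surjective, hence bijective. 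Therefore $W$ is a graph over $\fH_1$ and the previous paragraph yields the bounded solution $X$.

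The spectral hypothesis enters to keep this selection consistent. For an eigenvalue $\lambda\notin\spec_{\mathrm p}(A_0)$, the first component of the eigenvalue equation reads $(A_0-\lambda)\psi_0=-V\psi_1$, and the injectivity of $A_0-\lambda$ shows that $\psi_1=P_{\fH_1}\psi=0$ forces $\psi=0$. Hence the eigenvectors associated with these eigenvalues have nonzero $\fH_1$-component, which is exactly what makes it possible for their projections to span $\fH_1$ and to be chosen linearly independent. (The standing cyclicity assumption is not needed for this existence argument; the construction relies only on the self-adjointness of $\vB$, the finite dimension of $\fH_1$, and the projection hypothesis.)

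The step I expect to require the most care is the structural equivalence "graph over $\fH_1$ $\Leftrightarrow$ complement is a graph over $\fH_0$", and specifically the verification that $W^\perp$ is a graph over the \emph{whole} of $\fH_0$ with a bounded defining operator. This is precisely where $\dim\fH_1<\infty$ is essential: for a finite-rank $Y$ the adjoint $Y^\ast$ is everywhere defined and bounded, and the orthogonality relation $h_1=-Y^\ast h_0$ parametrises $W^\perp$ by all of $\fH_0$, so no nonzero vector of $\fH_1$ lies in $W^\perp$ and $P_{\fH_0}|_{W^\perp}$ is a bijection. Without the finite-dimensionality assumption the complement of a graph over $\fH_1$ need not be a graph over $\fH_0$ at all, and the operator one extracts could fail to be bounded, or even densely defined.
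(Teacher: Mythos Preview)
Your argument treats only part~(ii), and there it is correct but proceeds along a different route from the paper. The paper builds the solution explicitly: from each eigenvalue $\lambda_k\notin\spec_p(A_0)$ and the $\fH_1$-component $y_k$ of an eigenvector it forms the data $\Lambda=\{(y_k,\lambda_k)\}$ and defines
\[
X_\Lambda x=\limeps\sum_{j=1}^n P_{\Lambda,j}^\ast V^\ast(A_0-\lambda_j+\ii\eps)^{-1}x,
\]
then verifies directly that this is a bounded strong solution (Proposition~\ref{prop:solutions}). You bypass this machinery: you take $W$ to be the span of $n$ eigenvectors whose $\fH_1$-projections form a basis, note that $W$ is a finite-dimensional graph over $\fH_1$, and pass to $W^\perp$, which is the graph over $\fH_0$ of $-Y^\ast$. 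Your approach is shorter and more geometric, and your closing remark that cyclicity is not needed for~(ii) is correct. In exchange, the paper's formula gives the solution explicitly and extends uniformly to the unbounded, non-closable solutions coming from $\lambda_j\in K_{sc}$ (Proposition~\ref{prop:solutions}(ii)), which the orthogonal-complement trick cannot reach.

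Part~(i), however, is not covered by your proposal. The multiplicity bound needs its own argument: one shows that $\fH_1$ itself is a cyclic generating subspace for $\vB$ (Lemma~\ref{lemma:MultispectrumB}), and here the cyclicity hypothesis on $\Ran V$ is essential. For the second clause of~(i) your graph-complement idea would apply with $W$ equal to the full eigenspace of the multiplicity-$n$ eigenvalue $\lambda$, but you must first check that $P_{\fH_1}|_W$ is injective. Your discussion of the ``spectral hypothesis'' assumes $\lambda\notin\spec_p(A_0)$, which is not given in~(i); the paper deduces it from the Herglotz description of $\spec_p(A_0)$ in Proposition~\ref{prop:vsternv1}. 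Alternatively one can argue directly from cyclicity: if $\psi=\psi_0\oplus 0$ were a nonzero eigenvector of $\vB$, then $A_0\psi_0=\lambda\psi_0$ and $V^\ast\psi_0=0$, whence $\psi_0\perp A_0^k\Ran V$ for all $k$ and so $\psi_0=0$. Either way, this step is missing.
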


\begin{corollary} \label{cor:main}
Let $\fH $ be finite dimensional
and assume that the spectra of $\vB $ and $A_0 $ are disjoint. 
Then there exists at least one bounded
solution to the Riccati equation \nolinebreak \eqref{eq:ric10}.
\end{corollary}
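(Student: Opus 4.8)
The plan is to deduce the corollary directly from Theorem~\ref{theo:main}(\ref{item:theo:main:(b)}). Since $\fH$ is finite dimensional, so is $\fH_1 = \fH_0^{\perp}$; write $n := \dim \fH_1$. The obstacle is that Theorem~\ref{theo:main} presupposes that $\Ran V$ is cyclic generating for $A_0$, which is \emph{not} among the hypotheses of the corollary. So the first and only genuinely non-routine step is to show that the disjointness $\spec(\vB) \cap \spec(A_0) = \emptyset$ already \emph{forces} this cyclicity; everything afterwards is a direct verification of the remaining hypotheses.

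First I would introduce the cyclic subspace $\fK := \overline{\linspan\{ A_0^k v \mid k \in \NN_0,\, v \in \Ran V\}} \subseteq \fH_0$ generated by $\Ran V$, and consider its orthogonal complement $\fK^{\perp}$ taken within $\fH_0$. Because $A_0$ is self-adjoint and $\fK$ is $A_0$-invariant, the complement $\fK^{\perp}$ is $A_0$-invariant as well. Moreover $\Ran V \subseteq \fK$ gives $V^{\ast}|_{\fK^{\perp}} = 0$, since for $w \in \fK^{\perp}$ and $u \in \fH_1$ one has $\langle V^{\ast} w, u\rangle = \langle w, V u\rangle = 0$. Hence $\fK^{\perp}$ reduces $\vB$ and $\vB|_{\fK^{\perp}} = A_0|_{\fK^{\perp}}$. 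Consequently $\spec(A_0|_{\fK^{\perp}})$ is contained in both $\spec(A_0)$ and $\spec(\vB)$, so the disjointness assumption yields $\spec(A_0|_{\fK^{\perp}}) = \emptyset$. In finite dimensions this means $\fK^{\perp} = \{0\}$, i.e.\ $\fK = \fH_0$ and $\Ran V$ is cyclic generating for $A_0$.

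With cyclicity in hand, it remains to check the spectral conditions of part~(\ref{item:theo:main:(b)}). As $\fH$ is finite dimensional, the point spectrum of $A_0$ coincides with $\spec(A_0)$, and by hypothesis every eigenvalue of $\vB$ lies outside $\spec(A_0)$. Since $\vB$ is self-adjoint on the finite-dimensional space $\fH$, its eigenvectors span all of $\fH$; counted with multiplicities there are $\dim \fH \geq n$ of them, and they all belong to eigenvalues outside the point spectrum of $A_0$. Choosing $U := \fH$ as the span of these eigenvectors makes $P_{\fH_1} U = \fH_1$ trivially true. All hypotheses of Theorem~\ref{theo:main}(\ref{item:theo:main:(b)}) are therefore satisfied, and we conclude that the Riccati equation \eqref{eq:ric10} admits a bounded solution. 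The conceptual heart of the argument is the reduction in the second paragraph: any non-cyclic part of $\fH_0$ would persist as a common direct summand of $A_0$ and $\vB$, which the disjointness of their spectra can only tolerate if that part is absent.
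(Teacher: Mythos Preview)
Your proof is correct. Both your argument and the paper's apply Theorem~\ref{theo:main}(\ref{item:theo:main:(b)}) in the same way for the final step, but you handle the cyclicity hypothesis differently. The paper does not attempt to \emph{prove} cyclicity; instead it passes to the $A_0$-invariant subspace $\fK_0$ generated by $\Ran V$, observes that any solution on $\fK_0$ extends by zero on $\fK_0^{\perp}$ to a solution on all of $\fH_0$, and thus reduces without loss of generality to the situation of Hypothesis~\ref{hypo:problem}. Your route is more specific to the present hypotheses: you show that the spectral disjointness itself forces $\fK_0^{\perp}=\{0\}$, because otherwise $\vB$ and $A_0$ would share the reducing summand $\fK_0^{\perp}$ and hence share spectrum. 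This gives a slightly sharper structural insight (under the corollary's assumptions the cyclic hypothesis is automatic), while the paper's reduction is a general-purpose device that does not itself rely on disjointness and would apply equally well to other variants of the corollary.
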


\section{Preliminaries}
In this section we want to fix some notations and explain facts about the concepts that we will use
in the following and need to prove Theorem \ref{theo:main}. 
Mainly, we present facts and proofs for readers that are not familiar with Herglotz functions, multiplicity of spectra and the decomposition of the spectrum into an absolutely continuous and
singular part.

We will
use the notation $\cL(\fH, \fK)$ for the set of bounded linear operators
from a Hilbert space $\fH$ to a Hilbert space $\fK$.
Moreover, we will write $\cL(\fH)$ instead of $\cL(\fH, \fH)$.
All considered Hilbert spaces are complex and separable. The spectrum
of a bounded linear operator $T: \fH \rightarrow \fH$ 
is denoted by $\spec(T)$ and the point spectrum, \ie the set of all
eigenvalues, is denoted by $\spec_p(T)$. Moreover, we will use the following notion of multiplicity of spectra, cf. \cite{Akhiezer.1977}.

\begin{definition}
 For a self-adjoint operator $T \in \cL(\fH)$ in a Hilbert space $\fH$, we call the minimal
 dimension of all cyclic generating subspaces the \emph{multiplicity of the spectrum of $T$}. 
Here, a subspace $U \subset \fH$ is called a
cyclic generating subspace for the operator $T$ if
 		\begin{equation*}
		 \linspan \Big\{ T^k u ~\big|~ k \in \NN_0, \, u \in U \Big\}
		\end{equation*}
is dense in $\fH$.
\end{definition}

With this definition the spectrum of an operator has multiplicity $1$ and is called \emph{simple}
if and only if there is a cyclic vector for this operator.
If we consider finite dimensional Hilbert spaces, the multiplicity of the spectrum above coincides with the maximal multiplicity of the eigenvalues of $T$. In infinite dimensional Hilbert spaces it is possible for the spectrum to have infinite multiplicity, e.g., the 
spectrum of the identity.

Now we will explain how so-called \emph{Herglotz functions} can be used to describe
self-adjoint operators and their spectra if the multiplicity is finite.
We will always write $\CC_+ := \{ z \in \CC ~|~ \im z > 0 \}$ for the upper complex half-plane
and
also use the following notion from \cite{Gesztesy.2000}: 

\begin{definition} \label{def:her}
\begin{enumerate}[(i)]
\item A holomorphic function $m : \CC_+ \rightarrow \CC$ is called
a \emph{scalar Herglotz function} if $\im m(z) \geq 0$ for all $z \in \CC_+$.
\item
An analytic function
$M: \CC_+ \rightarrow \CC^{n \times n}$ 
or $M: \CC_+ \rightarrow \cL(\CC^{n})$
with $n \in \NN$  is called a \emph{matrix-valued Herglotz function}
or \emph{Herglotz matrix} if 
	\begin{equation*} 
		\im M(z):= \frac{1}{2 \ii} (M(z) - M(z)^\ast) \geq 0
	\end{equation*} 
	for all $z \in \CC_+$.
\end{enumerate}
\end{definition}
A classical result in this theory is that every matrix-valued Herglotz function has a unique
integral representation, see \cite[Theorem 5.4]{Gesztesy.2000}. Therefore, there is a characteristic example of a Herglotz function if a matrix-valued measure $\Omega$ is given.
We call a map on the Borel sets of $\RR$, denoted by $\cB(\RR)$, with $\Omega : \cB(\RR) \rightarrow \CC^{n \times n}$  a
\emph{matrix-valued measure} if 
	\begin{equation*}
	\Omega_{y,x} : \cB(\RR) \rightarrow \CC \, 
	, \quad \Delta \mapsto \sca{y, \Omega(\Delta) x}_{\CC^n}
	\end{equation*}
is a (finite) complex measure for all $x,y \in \CC^n$. If we demand $\Omega(\Delta) \geq 0 $ for all Borel sets $\Delta \subset \RR$, the map $\Omega_{x,x}$ is a positive measure for all $x \in \CC^n$. 

\begin{example} \label{ex:Herglotz}
For each matrix-valued measure $\Omega$ with $\Omega(\Delta) \geq 0$ for all Borel sets $\Delta$ the map
	\begin{equation*}
	M : z \mapsto \int_{\RR} \frac{1}{t-z} \, d\Omega(t)
	\end{equation*}
defines a matrix-valued Herglotz function on $\CC_+$.
\end{example}

Lebesgue's decomposition theorem for ordinary positive measures that are 
$\sigma$-finite can easily
be generalised to complex measures, see \cite[Theorem 6.10]{Rudin.1987}, and to
matrix-valued measures. So for each matrix-valued measure $\Omega$ there is a unique decomposition into an absolutely continuous
and a singular measure with respect to the Lebesgue measure. 
By separating the atoms of the measure, the singular
part can additionally split into a singularly continuous part and a pure point part:
	\begin{equation*}
	\Omega = \Omega_{ac} + \Omega_s = \Omega_{ac} + \Omega_{sc} + \Omega_{pp}
	\,.
	\end{equation*}

With regard to the example of a Herglotz matrix above, we want to analyse these parts of the measure and it turns out that
an ordinary positive measure is sufficient for that task:

\begin{prop} \label{prop:Omega-equiv-omega}
Assume $\Omega : \cB(\RR) \rightarrow \CC^{n \times n}$ is a matrix-valued measure with $\Omega(\Delta) \geq 0$ for all Borel sets $\Delta \subset \RR$. Then
the positive measure $\omega(\Delta) := \trace( \Omega(\Delta) )$ is equivalent to $\Omega$, \ie
they have precisely the same null sets.
\end{prop}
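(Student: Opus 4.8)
The plan is to prove the two set inclusions that together constitute equivalence, reading ``$\Delta$ is an $\Omega$-null set'' as $\Omega(\Delta) = 0$. One inclusion is immediate: if $\Omega(\Delta) = 0$ then $\omega(\Delta) = \trace(\Omega(\Delta)) = 0$. Before treating the converse I would first record that this pointwise notion of a null set is the right one, by noting that $\Omega$ is monotone in the Loewner order: for $\Delta' \subseteq \Delta$ finite additivity gives $\Omega(\Delta) = \Omega(\Delta') + \Omega(\Delta \setminus \Delta')$ with both summands positive semidefinite, so $0 \le \Omega(\Delta') \le \Omega(\Delta)$. In particular $\Omega(\Delta) = 0$ forces $\Omega(\Delta') = 0$ for every Borel $\Delta' \subseteq \Delta$, and the same clearly holds for the positive scalar measure $\omega$; hence neither notion of a null set depends on passing to subsets.

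The heart of the argument is the converse inclusion, $\omega(\Delta) = 0 \Rightarrow \Omega(\Delta) = 0$. Here I would use that $\Omega(\Delta)$ is a positive semidefinite Hermitian matrix, so its trace is the sum of its nonnegative eigenvalues. The cleanest route is to observe that for the standard basis $e_1, \dots, e_n$ of $\CC^n$ the diagonal entries $\sca{e_i, \Omega(\Delta) e_i} \ge 0$ are nonnegative and sum to $\omega(\Delta)$; hence $\omega(\Delta) = 0$ forces every diagonal entry to vanish. Positive semidefiniteness of each $2 \times 2$ principal submatrix then yields $\abs{\sca{e_i, \Omega(\Delta) e_j}}^2 \le \sca{e_i, \Omega(\Delta) e_i} \sca{e_j, \Omega(\Delta) e_j} = 0$, so the off-diagonal entries vanish as well and $\Omega(\Delta) = 0$. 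Equivalently, one may bound $\sca{x, \Omega(\Delta) x} \le \trace(\Omega(\Delta)) \, \|x\|^2 = 0$ for all $x \in \CC^n$, since the largest eigenvalue of a positive semidefinite matrix is dominated by its trace, and conclude $\Omega(\Delta) = 0$ from the vanishing of the associated quadratic form.

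Combining the two inclusions shows $\Omega(\Delta) = 0$ if and only if $\omega(\Delta) = 0$, which is exactly the asserted equivalence of null sets. I do not expect a genuine obstacle here: the only points requiring care are the interpretation of ``null set'' for a matrix-valued measure and the verification that the trace-zero condition really forces the whole positive semidefinite matrix to vanish, both of which reduce to elementary linear algebra once $\Omega(\Delta) \ge 0$ is used. It is worth stressing that the positivity hypothesis $\Omega(\Delta) \ge 0$ is essential; without it a nonzero Hermitian matrix can have vanishing trace, and the equivalence would fail.
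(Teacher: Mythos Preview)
Your proof is correct and follows essentially the same approach as the paper: both reduce the nontrivial direction to the elementary fact that a positive semidefinite matrix with zero trace must vanish, by first killing the diagonal entries and then the off-diagonal ones. The paper uses the inequality $2\,|\Omega_{jk}(\Delta)| \le \Omega_{jj}(\Delta) + \Omega_{kk}(\Delta)$ in place of your Cauchy--Schwarz bound $|\Omega_{jk}(\Delta)|^2 \le \Omega_{jj}(\Delta)\,\Omega_{kk}(\Delta)$, but this is a cosmetic difference; your extra remarks on Loewner monotonicity and the eigenvalue bound are fine but not needed for the statement as formulated.
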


\begin{proof}
 A null set for $\Omega$ is clearly a null set for $\omega$. Conversely, if we choose a
 Borel set $\Delta$ with $\omega(\Delta)= 0$, we can calculate
 	\begin{equation*}
 	2 \abs{ \Omega_{jk}(\Delta) } \leq \Omega_{jj}(\Delta) + \Omega_{kk}(\Delta) 
\leq 2 \trace (\Omega(\Delta))	
 	= 0 
 	\end{equation*}
 for all $1 \leq  j,k \leq n$. The first inequality is a standard property for non-negative matrices. See for example
 \cite[Lemma 5.1]{Gesztesy.2000}.
\end{proof}

In \cite{Gesztesy.2000} the authors give describing sets for the parts of the measure and call them supports. We also use this terminology here and call a Borel set $S \subset \RR$
a \emph{support} of a given Borel measure $\mu$, which can be positive, complex or matrix-valued, if
$\mu(\RR \setminus S) = 0$.
We call a support $S$ of $\mu$ \emph{minimal} if 
$ S \setminus T$ has Lebesgue measure zero
for any support $T \subset S$.

Since we will be merely interested in the singular part of the measure for analysing the Riccati equation, we just consider supports for the singular and the pure point part.

\begin{prop} \label{prop:support-M-Her}
 Let $\Omega : \cB(\RR) \rightarrow \CC^{n \times n}$ be a matrix-valued measure 
 that fulfils
  $\Omega(\Delta) \geq 0$ for all Borel sets $\Delta \subset \RR$ and
 $M: \CC_+ \rightarrow \CC^{n\times n}$ the matrix-valued Herglotz function from Example
 \ref{ex:Herglotz}. Then the set
 	\begin{equation*}
 	S_{\Omega, s} := \Big\{ \lambda \in \RR  ~\Big|~
							\lim_{\eps \rightarrow 0^+} \trace\, \im M(\lambda + \ii \eps) = \infty
							\Big\}
 	\end{equation*}
  is a minimal support of the singular part $\Omega_s$. The set
 	\begin{equation*} S_{\Omega, pp} := \Big\{ \lambda \in \RR  ~\Big|~
				\lim_{\eps \rightarrow 0^+} \eps  \trace  M(\lambda + \ii \eps)  \neq 0
							\Big\}
 	\end{equation*} 
 is the smallest support of the pure point part $\Omega_{pp}$.
\end{prop}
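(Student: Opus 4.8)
The plan is to reduce everything to the classical scalar theory of Herglotz functions by exploiting the trace. First I would observe that, by linearity of the trace and of the integral in Example~\ref{ex:Herglotz},
\begin{equation*}
 \trace M(z) = \int_{\RR} \frac{1}{t-z}\, d\omega(t)\,, \qquad \omega := \trace \Omega\,,
\end{equation*}
so that $\trace M$ is precisely the scalar Herglotz (Borel--Cauchy) transform of the positive measure $\omega$ from Proposition~\ref{prop:Omega-equiv-omega}. Consequently the two quantities in the statement become purely scalar objects: the Poisson-type integral $\im \trace M(\lambda+\ii\eps) = \int_\RR \frac{\eps}{(t-\lambda)^2+\eps^2}\, d\omega(t)$ and $\eps\, \trace M(\lambda+\ii\eps) = \int_\RR \frac{\eps}{t-\lambda-\ii\eps}\, d\omega(t)$.

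Next I would transfer all support statements from $\Omega$ to $\omega$. Since $\Omega(\Delta)\ge 0$, its trace vanishes exactly when $\Omega(\Delta)=0$, so a Borel set is $\Omega$-null iff it is $\omega$-null (Proposition~\ref{prop:Omega-equiv-omega}). Moreover the Lebesgue decomposition of $\Omega$ is compatible with the trace: $\trace \Omega_{ac}\ll\mathrm{Leb}$ while $\trace \Omega_s\perp\mathrm{Leb}$, and uniqueness of the scalar decomposition forces $\omega_s = \trace \Omega_s$ and, after separating atoms, $\omega_{pp} = \trace \Omega_{pp}$. Hence a (minimal, smallest) support of $\omega_s$ is a (minimal, smallest) support of $\Omega_s$, and likewise for the pure point part. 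It therefore suffices to prove both claims for the scalar measure $\omega$ and its transform $\trace M$.

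For the pure point part I would compute the limit directly. The integrand $\eps/(t-\lambda-\ii\eps)$ is bounded in modulus by $1$ and, as $\eps\to 0^+$, converges pointwise to $\ii$ at $t=\lambda$ and to $0$ otherwise, so dominated convergence gives $\limeps \eps\, \trace M(\lambda+\ii\eps) = \ii\, \omega(\{\lambda\})$. Thus $S_{\Omega,pp}$ is exactly the set of atoms of $\omega$; as $\omega_{pp}$ is concentrated on its atoms and every support of $\omega_{pp}$ must contain each atom, this set is the smallest support. For the singular part I would invoke the boundary-value theory of scalar Herglotz functions as in \cite{Gesztesy.2000}: for Lebesgue-almost every $\lambda$ the limit $\tfrac1\pi\limeps \im \trace M(\lambda+\ii\eps)$ exists and is finite (equal to the density of $\omega_{ac}$), whereas $\limeps \im \trace M(\lambda+\ii\eps)=\infty$ for $\omega_s$-almost every $\lambda$. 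The first fact shows $S_{\Omega,s}$ has Lebesgue measure zero, the second that it supports $\omega_s$; a Lebesgue-null support is automatically minimal, which gives the claim.

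The main obstacle is the singular-part divergence statement, which rests on the delicate radial boundary behaviour of the Poisson integral (a de~la~Vall\'ee~Poussin-type theorem): one needs simultaneously the finiteness of $\im \trace M$ Lebesgue-a.e.\ and its divergence $\omega_s$-a.e. I would quote these from \cite{Gesztesy.2000} rather than reprove them; once the equivalence of the Lebesgue decompositions under the trace is in place, the remaining reductions are routine.
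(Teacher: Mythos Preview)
Your proposal is correct and follows essentially the same route as the paper: reduce to the scalar trace measure $\omega=\trace\Omega$ via Proposition~\ref{prop:Omega-equiv-omega}, then invoke the scalar boundary-value theory from \cite{Gesztesy.2000}. The paper's own proof is a one-line citation of \cite[Theorems~3.1 and 6.1]{Gesztesy.2000} together with the equivalence of $\Omega$ and $\trace\Omega$; your additional details---the compatibility of the Lebesgue decompositions under the trace and the direct dominated-convergence computation for the atoms---are all sound and simply flesh out what the paper leaves implicit.
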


\begin{proof}
 By the equivalence of the measures $\Omega$ and $\trace \Omega$, one can use the support theorem
 \cite[Theorem 3.1]{Gesztesy.2000} for scalar Herglotz functions or the
 support theorem  \cite[Theorem 6.1]{Gesztesy.2000}  for Herglotz matrices.
\end{proof}


In the next proposition we present a fundamental example of a Herglotz function in relation to a self-adjoint operator
$T \in \cL(\fH)$, where we will write $\EE_{T}$ for its projector-valued spectral measure. In section \ref{sec:Eigen-and-singular} we will concretise this example.

\begin{prop} \label{prop:vsternv1}
 Let $T \in \cL(\fH) $ be self-adjoint with multiplicity of the spectrum $p \in \NN $
 and $n \geq p$ be a integer number.
 Moreover, let $V: \CC^n \rightarrow \fH $ be
 a linear operator such that $\Ran V $ is a cyclic generating
 subspace for $T$. Then
 	\begin{equation*} 
 	M: z \mapsto V^\ast (T - z)^{-1} V
 	\end{equation*} 
 is a matrix-valued Herglotz function for $z \in \CC_+ $ which can be represented by
 a matrix-valued measure $\Omega$ as
  	\begin{equation*} 
  	M(z) = \int_{\RR} \frac{1}{t-z} \, d\Omega(t) \,.
 	\end{equation*} 
  The measure
 $\Omega $ is equivalent to 
 the spectral measure $\EE_T $, \ie they have the same null sets. The set 
 	\begin{equation} S_{pp} := \Big\{ \lambda \in \RR ~\Big|~
				\lim_{\eps \rightarrow 0^+} \eps  \trace V^\ast (T - \lambda - i \eps)^{-1} V \neq 0
							\Big\} \label{eq:pp-spectrum-Herglotz}
 	\end{equation} 
 coincides with the point spectrum of $T$.
\end{prop}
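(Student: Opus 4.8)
The plan is to deduce everything from the spectral theorem for the bounded self-adjoint operator $T$ together with the support theorem packaged in Proposition~\ref{prop:support-M-Her}, the cyclicity hypothesis entering only at the equivalence step.

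First I would check that $M(z) = V^\ast(T-z)^{-1}V$ is a Herglotz matrix. Since $T$ is self-adjoint its spectrum is real, so $(T-z)^{-1}$ is well defined and analytic in $z$ on $\CC_+$, whence $M$ is analytic with values in $\CC^{n\times n}$. For the positivity of the imaginary part I would write $M(z)^\ast = V^\ast(T-\bar z)^{-1}V$ and apply the resolvent identity
\begin{equation*}
(T-z)^{-1}-(T-\bar z)^{-1} = (z-\bar z)\,(T-z)^{-1}(T-\bar z)^{-1},
\end{equation*}
which after dividing by $2\ii$ gives $\im M(z) = (\im z)\, V^\ast (T-z)^{-1}\big((T-z)^{-1}\big)^\ast V$; testing against $x\in\CC^n$ yields $\sca{x,\im M(z)x} = (\im z)\,\|((T-z)^{-1})^\ast Vx\|^2 \geq 0$. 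For the integral representation I would avoid the abstract representation theorem and instead insert the spectral resolution $(T-z)^{-1}=\int_\RR (t-z)^{-1}\,d\EE_T(t)$ and set $\Omega(\Delta) := V^\ast \EE_T(\Delta) V$. Since each $\EE_T(\Delta)$ is an orthogonal projection, one has $\Omega(\Delta)\geq 0$, and $\Omega$ is a finite matrix-valued measure with $\Omega(\RR)=V^\ast V$ and compact support; thus $M(z)=\int_\RR (t-z)^{-1}\,d\Omega(t)$ is exactly the function of Example~\ref{ex:Herglotz}.

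The heart of the proof is the equivalence of $\Omega$ and $\EE_T$, and this is where cyclicity is used. One inclusion is immediate: if $\EE_T(\Delta)=0$ then $\Omega(\Delta)=V^\ast\EE_T(\Delta)V=0$. For the converse I would argue that $\Omega(\Delta)=0$ forces $\sca{x,\Omega(\Delta)x}=\|\EE_T(\Delta)Vx\|^2=0$ for every $x\in\CC^n$, hence $\EE_T(\Delta)V=0$, i.e.\ $\EE_T(\Delta)$ annihilates $\Ran V$. Because spectral projections commute with $T$, we get $\EE_T(\Delta)T^k v = T^k\EE_T(\Delta)v = 0$ for all $v\in\Ran V$ and $k\in\NN_0$, so $\EE_T(\Delta)$ vanishes on $\linspan\{T^k v : k\in\NN_0,\ v\in\Ran V\}$; since this span is dense by hypothesis and $\EE_T(\Delta)$ is bounded, $\EE_T(\Delta)=0$. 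This is the step I expect to be the crux: without cyclicity the measure $\Omega$ could be blind to a nontrivial spectral subspace, and the correspondence with $T$ would break down.

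Finally I would read off the point spectrum. Applying Proposition~\ref{prop:support-M-Her} to $\Omega$ identifies the set $S_{pp}$ in \eqref{eq:pp-spectrum-Herglotz} with the smallest support of the pure point part $\Omega_{pp}$, which is precisely the set of atoms $\{\lambda : \Omega(\{\lambda\})\neq 0\}$. By the equivalence just proved, applied to the singleton $\Delta=\{\lambda\}$, we have $\Omega(\{\lambda\})\neq 0$ if and only if $\EE_T(\{\lambda\})\neq 0$; and for a self-adjoint operator $\EE_T(\{\lambda\})\neq 0$ holds exactly when $\lambda$ is an eigenvalue, with range the corresponding eigenspace. Chaining these equivalences gives $S_{pp}=\spec_p(T)$, which completes the proof.
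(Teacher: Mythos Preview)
Your proof is correct and follows essentially the same route as the paper: verify the Herglotz property via the resolvent identity, set $\Omega(\Delta)=V^\ast\EE_T(\Delta)V$ to obtain the integral representation, prove equivalence of $\Omega$ and $\EE_T$ from cyclicity, and then invoke Proposition~\ref{prop:support-M-Her} to identify $S_{pp}$ with the atoms of $\EE_T$. The one noteworthy difference is in the converse equivalence step: you observe directly that $\Omega(\Delta)=0$ gives $\|\EE_T(\Delta)Vx\|^2=0$ and then use the commutation $\EE_T(\Delta)T^k=T^k\EE_T(\Delta)$ to propagate vanishing to the cyclic span, whereas the paper reaches the same conclusion via the functional calculus with $f(t)=t^m\chi_\Delta(t)t^k$; your variant is slightly more economical but the content is the same.
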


\begin{proof}
Since $z \mapsto (T - z)^{-1}$ is analytic and because of
the first resolvent identity, one can write
  	\begin{equation*} 
  	 \im M(z) = \im z \left[ V^\ast (T -\overline{z})^{-1} (T - z )^{-1}
				V
				\right] \, .
	\end{equation*}
Obviously $\im M(z) \geq 0$ holds and therefore $M$ is a matrix-valued Herglotz function. If we now define
	\begin{equation*}
		\Omega(\Delta) := 
				V^\ast \EE_{T}(\Delta) V
	\end{equation*}
for every Borel set $\Delta \subset \RR $, we get a matrix-valued measure and the representation for $M$ holds by the spectral theorem. Clearly,  a null set of $ \EE_{T}$ is also 
a null set of
$\Omega$. On the other hand, a Borel set $\Delta$ with $\Omega(\Delta) = 0$ fulfils via the polarisation identity
	\begin{equation*}
	 \sca{v, \EE_T(\Delta^{\prime}) u} = 0 \,\text{ for all }\, u,v \in \Ran V 
	 \,, ~
	\end{equation*}
and for all Borel sets $ \Delta^{\prime} \subset \Delta$.
Now we can use the spectral theorem for a measurable function $f(t) := t^m \chi_{\Delta}(t) t^k$,
where $\chi_{\Delta}$ is the characteristic function of $\Delta$ and $k,m$ non-negative integers:
	\begin{equation*}
	 0 = \int_{\Delta} t^{k+m} d \sca{v, \EE_T(t) u} = \sca{v, f(T) u} 
 		   = \sca{T^m v, \chi_{\Delta}(T) T^k u} \,.
	\end{equation*}
Since $u,v \in \Ran V$ and $\Ran V$ is a cyclic generating
 subspace for $T$, the following equation is true for all $x,y \in \fH$:
 	\begin{equation*}
 	0 =  \sca{y, \chi_{\Delta}(T) x} = \int_{\Delta} d \sca{y, \EE_T(t) x}
 			= \sca{y, \EE_T(\Delta) x}
 			\,.
 	\end{equation*}
That means that $\EE_T(\Delta) = 0$ and the measures are equivalent.

The smallest support of $\Omega_{pp}$ from Proposition \ref{prop:support-M-Her} is therefore also
a smallest support of the pure point part of the spectral measure $\EE_T$ and
it is well-known that the atoms of $\EE_T$ are exactly the eigenvalues of $T$.
\end{proof}

In the next definition we will decompose the spectrum 
of a self-adjoint operator in three parts. Analogously to above, one can generalise Lebesgue's
decomposition theorem even to a projector-valued measures like $\EE_{T}$. This is due to the fact that
	\begin{equation*}
		\Delta \mapsto \sca{y, \EE_{T}(\Delta) x}
	\end{equation*}
is a complex measure for every $x,y \in \fH$ which has a unique Lebegue decomposition.

\begin{definition}
 For a self-adjoint operator $T \in \cL(\fH)$ with spectral measure $\EE_T$ that has the Lebegue decomposition
 	\begin{equation*}
		\EE_T = \EE_{T, ac} + \EE_{T,s}  = \EE_{T,ac} + \EE_{T, sc} +\EE_{T,pp}
		\,,
	\end{equation*}
 we define the following sets for $w \in \{ ac, s, sc, pp \}$
  	\begin{equation*}
		\spec_{w}(T) := 
		\{ \lambda \in \RR ~|~ 
			\text{ every open neighbourhood } U \text{ of } \lambda 
			\text{ fulfils } \EE_{T, w}(U) \neq 0
		\}
		\,.
	\end{equation*}
 These closed sets 
 $\spec_{ac}(T)$,
  $\spec_{s}(T)$,
 $\spec_{sc}(T)$ and
 $\spec_{pp}(T)$ are called the \emph{absolutely continuous}, 
 \emph{singular},
 \emph{singularly continuous} and \emph{pure point spectrum of $T$}, respectively.
\end{definition}

Now it can be shown, cf. \cite[Chapter 10]{Kato.1966},
that for each self-adjoint operator $T$ there is a decomposition of its spectrum into
	 \begin{equation*} 
		\spec(T) = \spec_{ac}(T) \cup \spec_{sc}(T) \cup \spec_{pp}(T)
		\,.
	\end{equation*} 
Admittedly, none of this unions has to be disjoint. Note also that the pure point spectrum is in general larger than the set of eigenvalues $\spec_{p}(T)$ since the latter does not have to be closed. However,
	\begin{equation*} 
		\spec_{pp}(T) = \overline{\spec_{p}(T)}
	\end{equation*} 
always is true.

\section{Eigenvalues and singularly continuous spectrum of $\vB$} \label{sec:Eigen-and-singular}
Throughout this work we always assume the hypothesis below.

\begin{hypo} \label{hypo:problem}
	Let $\vB$ be a bounded self-adjoint operator which
	is represented with respect to the orthogonal decomposition $\fH = \fH_0 \oplus \fH_1$
	as an operator block matrix
		\begin{equation*} \vB := \begin{pmatrix}
			A_0 & V \\ V^\ast & A_1
		\end{pmatrix}  
		\,,
		\end{equation*} 
	where $A_j \in \cL(\fH_j)$ is self-adjoint for $j=0,1$ and $V \in \cL(\fH_1, \fH_0)$.
	
	Assume in addition that the Hilbert-space $\fH_1$ is finite dimensional
	and that $\Ran V$ is a cyclic generating subspace for the operator $A_0$, \ie
		\begin{equation*}
		 \linspan \Big\{ A_0^k v ~\big|~ k \in \NN_0, v \in \Ran V \Big\}
		\end{equation*}
	is dense in $\fH_0$.
\end{hypo}

Since this hypothesis claims that the multiplicity of the spectra $A_0$ and $A_1$ are not greater than 
$\dim \fH_1$, respectively,
the multiplicity of the spectrum of $\vB$ is also restrained.

\begin{lemma} \label{lemma:MultispectrumB}
 Assume Hypothesis \ref{hypo:problem}.
 Then $\fH_1 \subset \fH $ 
 is a cyclic generating subspace for $\vB$.
 In particular, the multiplicity of the 
 spectrum of $\vB$ cannot exceed
 $\dim \fH_1$.
\end{lemma}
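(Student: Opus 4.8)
The plan is to show that the finite-dimensional subspace $\fH_1$ is cyclic for $\vB$ by verifying that the closed linear span of $\{\vB^k w \mid k \in \NN_0,\, w \in \fH_1\}$ exhausts all of $\fH = \fH_0 \oplus \fH_1$. Since $\fH_1$ is automatically contained in this span (take $k=0$), the real content is to produce every vector of $\fH_0$ from the $\vB$-orbit of $\fH_1$. The hypothesis gives us exactly the tool: $\Ran V$ is cyclic generating for $A_0$, so it suffices to generate $\Ran V \subset \fH_0$ and then propagate it under powers of $A_0$.

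First I would compute how $\vB$ acts on a vector $w \in \fH_1$. Writing $\vB$ in block form, for $w \in \fH_1$ one has $\vB w = Vw \oplus A_1 w$, so the $\fH_0$-component $Vw$ lands in $\Ran V$; as $w$ ranges over $\fH_1$, these components sweep out all of $\Ran V$. This is the base case. The inductive step is to show that $A_0^k v$ for $v \in \Ran V$ lies in the span of the $\vB$-orbit of $\fH_1$. The natural approach is to apply $\vB$ repeatedly and extract $\fH_0$-components, but the off-diagonal coupling mixes the two spaces: applying $\vB$ to a vector $u \oplus 0$ with $u \in \fH_0$ gives $A_0 u \oplus V^\ast u$, which has the correct leading term $A_0 u$ in $\fH_0$ but a nonzero $\fH_1$-remainder $V^\ast u$.

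I expect the main obstacle to be controlling these $\fH_1$-remainders in the induction. The cleanest way to handle this is to argue by induction on $k$ that $A_0^k v \oplus (\text{something in } \fH_1)$ lies in the orbit span for each $v \in \Ran V$, and then use that $\fH_1$ itself is already in the span to subtract off the unwanted $\fH_1$-part. Concretely, suppose $A_0^k v \oplus y_k$ is in the span for some $y_k \in \fH_1$; applying $\vB$ gives $A_0^{k+1} v \oplus V^\ast A_0^k v$ plus a contribution $V y_k \oplus A_1 y_k$ coming from $y_k$. The $\fH_0$-part is now $A_0^{k+1}v + V y_k$, and since $V y_k \in \Ran V$ is already $A_0^0$-accessible and the $\fH_1$-components can be cleared using $\fH_1 \subset \mathrm{span}$, one can solve for $A_0^{k+1} v \oplus (\text{new } \fH_1\text{-term})$. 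Thus every $A_0^k v$ with $v \in \Ran V$ appears (modulo $\fH_1$) in the orbit span, and subtracting the $\fH_1$-part shows $A_0^k v \oplus 0$ is in the closed span.

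Finally, since $\{A_0^k v \mid k \in \NN_0,\, v \in \Ran V\}$ is dense in $\fH_0$ by the cyclicity hypothesis, the closed span of the $\vB$-orbit of $\fH_1$ contains all of $\fH_0 \oplus 0$, and together with $0 \oplus \fH_1$ this is all of $\fH$. Hence $\fH_1$ is a cyclic generating subspace for $\vB$. The bound on the multiplicity then follows immediately from the definition: the multiplicity is the minimal dimension over all cyclic generating subspaces, and $\fH_1$ is one such subspace of dimension $\dim \fH_1$, so the multiplicity of the spectrum of $\vB$ is at most $\dim \fH_1$.
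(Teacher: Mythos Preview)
Your proposal is correct and follows essentially the same approach as the paper. Both arguments hinge on the same two observations: (a) applying $\vB$ to $0\oplus w$ produces $Vw\oplus A_1 w$, so $\Ran V$ appears in the $\fH_0$-component of the orbit; and (b) since $0\oplus\fH_1$ is already in the span, any $\fH_1$-remainder can be subtracted off, allowing one to propagate $A_0^k v\oplus 0$ inductively. The paper organises this slightly differently---it first asserts that $\{Ve_i\oplus 0,\,0\oplus e_i\}$ is cyclic for $\vB$ (leaving the induction you spell out implicit in the phrase ``one concludes that''), then observes that $Ve_i\oplus A_1 e_i=\vB(0\oplus e_i)$ to drop the first family of generators---but the underlying computation is the same, and your more explicit induction is a legitimate way to fill in that step.
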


\begin{proof}
 Set $n:=\dim \fH_1$
 and choose a basis $(e_i)_{i=1,\dots,n}$ of $\fH_1$. 
 Since $\Ran V$ is a cyclic generating subspace for $A_0$ by Hypothesis \ref{hypo:problem},
 one concludes that
 	\begin{equation*}
 	\linspan 
 	\{ V e_i \oplus 0 \,,~ 0 \oplus e_i \in \fH_0 \oplus \fH_1
 	~|~
 									1 \leq i \leq n \}
 	\end{equation*}
 is a cyclic generating subspace for $\vB$. Obviously, if we substitute $ V e_i \oplus 0$
 with $ V e_i \oplus A_1 e_i$, the statement above will remain true.
 Since $\vB( 0 \oplus e_i ) = V e_i \oplus A_1 e_i $ holds, the space
 	 	\begin{equation*}
 	\linspan 
 	\{ 0 \oplus e_i \in \fH_0 \oplus \fH_1
 	~|~
 									1 \leq i \leq n \} = \fH_1
 	\end{equation*}
 is already a cyclic generating subspace for the operator $\vB$.
\end{proof}

The lemma above shows that the spectrum of the 
operator $\vA := A_0 \oplus A_1$,
which could have the multiplicity $2 \cdot \dim \fH_1$, is always altered by the off-diagonal perturbation such that the multiplicity is at most only $\dim \fH_1$.

It is possible to classify
the eigenvalues of $\vB$ into three distinct
cases and it will turn out that this is necessary for finding solutions to the Riccati equation.

\begin{lemma} \label{lemma:eigenvaluesA}
 Assume Hypothesis \ref{hypo:problem}. 
 A real number $ \lambda \in \RR $ is an eigenvalue of 
 the operator $\vB$
 with multiplicity $k$ if and only if
 there is a set of $k$ linear independent vectors $\{ y_j \}_{j=1, \ldots, k} \subset \fH_1 $ with
 	\begin{equation*}
 		Vy_j \in \Ran(A_0 - \lambda) \, , \quad j =1, \dots, k \,,
 	\end{equation*}
 and for each $j$ one of the following statements holds:
	\begin{enumerate}[(i)]
		\item \label{item:a}   $ \lambda \notin \spec_p(A_0) $ and
			\begin{align*}
 				 (A_1 - \lambda) y_j = V^\ast (A_0 - \lambda)^{-1} V y_j
 			\,.
			\end{align*}
 		\item  \label{item:b}  $ \lambda \in \spec_p(A_0) $ and
 			\begin{align*}
 				 (A_1 - \lambda) y_j = 
 				\limeps V^\ast (A_0 - \lambda - \ii \eps)^{-1} V y_j
 				\,.
			\end{align*}
 		\item \label{item:lemma:worstcase}
 		 $ \lambda \in \spec_p(A_0) $ with an eigenvector $x\in \fH_0 $
 			and
 				\begin{align*}
 				 (A_1 - \lambda) y_j = 
 				\limeps V^\ast (A_0 - \lambda - \ii \eps)^{-1} V y_j  - V^\ast x
 				\,.
 				\end{align*}	
	\end{enumerate} 
\end{lemma}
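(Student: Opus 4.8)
The plan is to read off the eigenvalue equation blockwise and then reduce the whole statement to a bookkeeping problem about the $\fH_1$-components of the eigenvectors. Writing a vector as $x \oplus y \in \fH_0 \oplus \fH_1$, the identity $\vB(x \oplus y) = \lambda (x \oplus y)$ is equivalent to the coupled system
\begin{equation*}
(A_0 - \lambda) x = -V y \,, \qquad (A_1 - \lambda) y = -V^\ast x \,.
\end{equation*}
The first fact I would establish is that the linear map $x \oplus y \mapsto y$ is injective on the eigenspace $\Kern(\vB - \lambda)$. Indeed, if $y = 0$ then the system forces $x \in \Kern(A_0 - \lambda)$ and $V^\ast x = 0$, i.e.\ $x \perp \Ran V$; combining $A_0 x = \lambda x$ with $x \perp \Ran V$ gives $\sca{x, A_0^k v} = \lambda^k \sca{x, v} = 0$ for all $k \in \NN_0$ and $v \in \Ran V$, so the cyclicity assumption of Hypothesis \ref{hypo:problem} yields $x = 0$ (this is the computation already used in the proof of Proposition \ref{prop:vsternv1}). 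Consequently the multiplicity of $\lambda$ equals the dimension of the space of admissible $\fH_1$-components, which reduces the claimed equivalence to characterising exactly which $y \in \fH_1$ arise as the $\fH_1$-part of an eigenvector.

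Next I would characterise the admissible $y$. Solvability of the first equation for some $x$ is precisely the condition $Vy \in \Ran(A_0 - \lambda)$, which explains the common requirement in the statement. Given this, I decompose $\fH_0 = \Kern(A_0 - \lambda) \oplus \Kern(A_0 - \lambda)^\perp$ and write any solution as $x = x_0 + x_1$, where $x_0 \in \Kern(A_0 - \lambda)^\perp$ is uniquely determined by $(A_0 - \lambda) x_0 = -V y$ and $x_1 \in \Kern(A_0 - \lambda)$ is free. Substituting into the second equation yields $(A_1 - \lambda) y = -V^\ast x_0 - V^\ast x_1$. When $\lambda \notin \spec_p(A_0)$ the kernel is trivial and $x_1 = 0$, which gives case (i); when $\lambda \in \spec_p(A_0)$ the freedom in $x_1$ produces the remaining two cases, with $x_1 = 0$ giving case (ii) and a genuine eigenvector $x_1 = x$ giving case (iii).

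The technical heart, and the step I expect to cost the most work, is to identify $-V^\ast x_0$ with the boundary limit in the statement, i.e.\ to prove
\begin{equation*}
-V^\ast x_0 = \limeps V^\ast (A_0 - \lambda - \ii \eps)^{-1} V y \,.
\end{equation*}
The point is that $Vy \in \Ran(A_0 - \lambda)$ both guarantees $Vy \perp \Kern(A_0 - \lambda)$, so that $Vy$ carries no spectral mass at $\lambda$, and that $\int (t-\lambda)^{-2}\, d\sca{Vy, \EE_{A_0}(t) Vy} < \infty$. Expressing the resolvent through the spectral theorem and using the estimate
\begin{equation*}
\Big| \frac{1}{t-\lambda-\ii\eps} - \frac{1}{t-\lambda} \Big|^2 = \frac{\eps^2}{\big((t-\lambda)^2 + \eps^2\big)(t-\lambda)^2} \leq \frac{1}{(t-\lambda)^2} \,,
\end{equation*}
dominated convergence shows that $(A_0 - \lambda - \ii\eps)^{-1} V y$ converges in norm to $-x_0$ as $\eps \to 0^+$, and applying $V^\ast$ gives the identity. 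In case (i) the operator $(A_0 - \lambda)^{-1}$ is the ordinary (possibly unbounded) inverse on the range, and the same computation reduces the limit to $V^\ast (A_0 - \lambda)^{-1} V y$.

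Finally I would assemble the equivalence. For the forward direction, if $\lambda$ has multiplicity $k$, a basis of $\Kern(\vB - \lambda)$ maps under $x \oplus y \mapsto y$ to $k$ linearly independent vectors $y_1, \dots, y_k$ by the injectivity above, and each $y_j$ satisfies one of (i)--(iii) by the reduction. For the converse, $k$ linearly independent $y_j$ satisfying the respective condition let me reconstruct eigenvectors $x_j \oplus y_j$; these are linearly independent because their $\fH_1$-components are, so the multiplicity is at least $k$, while the injectivity bounds it above by the dimension of the admissible-component space. The main care needed is to organise the three cases so that \emph{multiplicity exactly $k$} corresponds to \emph{exactly $k$ independent admissible $y_j$}, which follows once the injective correspondence between eigenvectors and their $\fH_1$-components is firmly in place.
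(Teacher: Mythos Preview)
The proposal is correct and follows essentially the same approach as the paper: split the eigenvalue equation blockwise, use the spectral theorem to identify $-V^\ast x_0$ with the resolvent boundary limit, and reconstruct eigenvectors from admissible $y$'s for the converse. In fact you are more thorough than the paper, which explicitly \emph{omits} the multiplicity argument (your injectivity of $x\oplus y\mapsto y$ via cyclicity) and merely invokes ``the spectral theorem'' for the limit where you supply the dominated-convergence estimate.
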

Note that $\Ran(A_0 - \lambda) 
 				\subset \left( \Ran \EE_{A_0}(\{\lambda\}) \right)^{\perp}$
always holds and therefore the limit
$\limeps V^\ast (A_0 - \lambda - \ii \eps)^{-1} V y_j$ is well-defined by the spectral theorem. Here, $\EE_{A_0}$ stands for 
the spectral measure of the self-adjoint operator $A_0$.

\begin{proof}
  We will omit the proof of the multiplicity part because it is straightforward after having proved the following. 
  We will just prove
  here that a real number $\lambda$ is an eigenvalue of $\vB$ if and only if
  one of the three statements is fulfilled for a non-zero vector $y_1 \in \fH_1$ with
  $Vy_1 \in \Ran(A_0 - \lambda)$.
  We will start with the "only if" part.
  
  Note that a number $\lambda \in \RR$ is an eigenvalue of $\vB$ if and only if the two equations
	\begin{align}
	 &(A_0 - \lambda) y_0 = -V y_1 \label{eq:proofeigenwert1-A1} \\
	 &(A_1 - \lambda) y_1 = -V^\ast y_0   \label{eq:proofeigenwert1-A2}
	\end{align}
 are fulfilled for a non-zero vector $(y_0,y_1) \in \fH_0 \oplus \fH_1$.
  \vskip1ex
  \textit{First case:} If $\lambda \notin \spec_p(A_0)$, then $(A_0 - \lambda)$ is injective and
  we immediately get the equation of statement \eqref{item:a} for $y_1 \neq 0$.
  
  \textit{Second case:} If $\lambda \in \spec_p(A_0)$, the operator $(A_0 - \lambda)$ is not injective and therefore we change equation \eqref{eq:proofeigenwert1-A1}
  for an $\eps > 0$:
   \begin{equation}
   V^\ast (A_0 - \lambda -i \eps)^{-1} (A_0 - \lambda) y_0 = 
	- V^\ast (A_0 - \lambda - i \eps)^{-1} V y_1 \,.
	\label{eq:eigenvalues-proof1-eps}
   \end{equation}
  By the spectral theorem we can calculate
     \begin{equation*}
	\lim_{\eps \rightarrow 0^+} V^\ast (A_0 - \lambda -i \eps)^{-1} (A_0 - \lambda) y_0 = 
 				V^\ast (I_{\fH_0} - \EE_{A_0}(\{\lambda\}) ) y_0 \,.
   \end{equation*}
  For $y_0 \in \Ran \EE_{A_0}(\{\lambda\})^\perp$ we get the equation of \eqref{item:b} and
  for $y_0 \notin \Ran \EE_{A_0}(\{\lambda\})^\perp$ we get the equation of \eqref{item:lemma:worstcase}. In both cases $y_1 \neq 0$.
    \vskip1ex
    To show the "if" part of the claim above, one has to construct an eigenvector 
    $(y_0,y_1) \in \fH_0 \oplus \fH_1$ for $\vB$. Since $y_1$ with $V y_1 \in \Ran (A_0 - \lambda)$ is given,
    only $y_0$ is to construct. In the case \eqref{item:a} and \eqref{item:b} one can simply set
    	$y_0 \in \Ran \EE_{A_0}(\{\lambda\})^\perp$ such that
    		   \begin{equation*}
 				(A_0 - \lambda) y_0  = - Vy_1
   			\end{equation*}
   	holds. In the third case \eqref{item:lemma:worstcase} one has to do a similar reasoning and choose 
   	the vector
   	    	$y_0^{\prime} \in \Ran \EE_{A_0}(\{\lambda\})^\perp$ such that
    		   \begin{equation*}
 				(A_0 - \lambda) y_0^{\prime}  = - Vy_1
   			\end{equation*}
   	holds. Then just set $y_0 := y_0^{\prime} +x$.
\end{proof}

\begin{remark} \label{remark:lemmaforinfinite}
 The characterisation of the eigenvalues of $\vB$ in Lemma \ref{lemma:eigenvaluesA}
 remains true in the case of infinite dimensional $\fH_1$ if the (strong) limits are
 replaced with weak limits.
\end{remark}

\begin{example}
\label{ex:bad1}
We consider the Hilbert space $\fH = \fH_0 \oplus \fH_1 $ with $\fH_0 = \fH_1 = \CC^2$
and the linear operator $\vB: \fH \rightarrow \fH $ given by:
\begin{equation*}
\vB =   
\begin{pmatrix}
		A_0 & V \\ V^\ast & A_1
		\end{pmatrix}  
= 
\left(
\begin{array}{c|c}
		\begin{matrix}
				      ~ 1~ &   \\
				         & ~0 ~ 
		\end{matrix} & 
		\begin{matrix}
				      ~ 1 ~&   \\
				       ~1~  & ~1~  
		\end{matrix} \\ \hline
		\begin{matrix}
				       ~1~ & ~1~   \\
				         & ~1~  
		\end{matrix} 
	& 	\begin{matrix}
				       ~0 ~ &   \\
				         & ~0 ~  
		\end{matrix}
\end{array}
\right) \,.
\end{equation*}
There are three eigenvalues of $\vB$ that belong to condition \eqref{item:a} of
Lemma \ref{lemma:eigenvaluesA} and there is the eigenvalue $1$ that 
fulfils condition \eqref{item:lemma:worstcase}. By choosing 
$y_1 = (0,1)^T $ and $x = (-1, 0)^T $ we see that
	\begin{equation*}
	(A_1 - 1) y_1 = 
 				\limeps V^\ast (A_0 - 1 - i \eps)^{-1} V y_1 - V^\ast x
	\end{equation*}
holds.
\end{example}

The singular and singularly continuous spectrum of $\vB$ can be described 
by the use of minimal supports of the spectral measure which we will do in the following.
We write
$J_{\fH_1}: \fH_1 \rightarrow \fH $ for the inclusion map
and in this case the adjoint satisfies
$J_{\fH_1}^\ast(x) = P_{\fH_1}(x)$  for all $x \in \fH$.

\begin{prop}\label{prop:Mmatrix}
Assume Hypothesis \ref{hypo:problem}.
The map
$M: \CC_+ \rightarrow \cL(\fH_1)$ defined by
	\begin{equation*}
		M(z) := J_{\fH_1}^\ast
		 (\vB -z)^{-1} J_{\fH_1}
				\label{eq:Mzblock1} 
	\end{equation*}
is a matrix-valued Herglotz function with
		\begin{equation}
		M(z) = \big[ (A_1 -z) - V^\ast (A_0 -z)^{-1} V \big]^{-1} 
				\label{eq:Mzblock1-A} 		\, .
	\end{equation}
\end{prop}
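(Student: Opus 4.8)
The plan is to treat the two assertions separately: first the Herglotz property, and then the Schur-complement formula \eqref{eq:Mzblock1-A}.

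For the Herglotz property I would simply repeat the argument already used in the proof of Proposition \ref{prop:vsternv1}, now with $\vB$ in the role of $T$ and the inclusion $J_{\fH_1}$ in the role of $V$. Since $\vB$ is bounded and self-adjoint, every $z \in \CC_+$ lies in its resolvent set, so $z \mapsto (\vB - z)^{-1}$ is analytic on $\CC_+$ and hence so is $M$. Writing the first resolvent identity for the pair $\overline z, z$ and sandwiching it between $J_{\fH_1}^\ast$ and $J_{\fH_1}$, while using $((\vB-z)^{-1})^\ast = (\vB - \overline z)^{-1}$, gives
\begin{equation*}
 \im M(z) = (\im z)\, \big[(\vB - z)^{-1}J_{\fH_1}\big]^\ast \big[(\vB - z)^{-1}J_{\fH_1}\big] \geq 0
\end{equation*}
for $z \in \CC_+$. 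Because $\fH_1$ is finite dimensional, $M(z) \in \cL(\fH_1)$ is literally an $n \times n$ matrix, so $M$ is a matrix-valued Herglotz function in the sense of Definition \ref{def:her}.

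For the formula \eqref{eq:Mzblock1-A} I would compute the resolvent block-wise rather than guess a factorisation. Fix $z \in \CC_+$ and put $(y_0, y_1) = (\vB - z)^{-1}(x_0,x_1)$; applying $\vB - z$ yields
\begin{align*}
 (A_0 - z)y_0 + V y_1 &= x_0, \\
 V^\ast y_0 + (A_1 - z) y_1 &= x_1.
\end{align*}
Since $z \in \CC_+$ lies in the resolvent set of the self-adjoint operator $A_0$, the operator $A_0 - z$ is boundedly invertible, so the first line gives $y_0 = (A_0 - z)^{-1}(x_0 - V y_1)$. Substituting this into the second line collapses everything onto $\fH_1$ and produces the reduced equation
\begin{equation*}
 \big[(A_1 - z) - V^\ast(A_0 - z)^{-1}V\big]\, y_1 = x_1 - V^\ast (A_0 - z)^{-1} x_0,
\end{equation*}
and I denote the operator in brackets by $S(z)$.

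The one point that genuinely needs an argument — and which I expect to be the only real obstacle — is the invertibility of $S(z)$, since the displayed expression only makes sense once $S(z)^{-1}$ exists. Here the finite-dimensionality of $\fH_1$ does the work: if $S(z)y_1 = 0$, then setting $y_0 := -(A_0 - z)^{-1}V y_1$ makes $(y_0,y_1)$ a solution of $(\vB - z)(y_0,y_1) = 0$, and injectivity of $\vB - z$ forces $(y_0,y_1) = 0$, whence $y_1 = 0$. Thus $S(z)$ is injective on the finite-dimensional space $\fH_1$ and therefore boundedly invertible. Finally, choosing $x_0 = 0$, i.e. $(x_0,x_1) = J_{\fH_1}x_1$, the reduced equation gives $y_1 = S(z)^{-1}x_1$; since $M(z)x_1 = J_{\fH_1}^\ast(y_0,y_1) = y_1$ is precisely the $\fH_1$-component, I conclude $M(z) = S(z)^{-1}$, which is exactly \eqref{eq:Mzblock1-A}.
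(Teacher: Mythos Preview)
Your proof is correct and follows the same route as the paper. The Herglotz part is identical (analyticity of the resolvent plus the first resolvent identity), and for \eqref{eq:Mzblock1-A} the paper simply invokes the Schur complement formula for block operators from \cite[Proposition 1.6.2]{Tretter.2008}, whereas you carry out that computation by hand. One minor remark: your invertibility argument for $S(z)$ via finite-dimensionality is fine, but in fact the invertibility of the Schur complement already follows in full generality from the invertibility of $\vB - z$ and $A_0 - z$, which is exactly what the cited result in Tretter covers; so the restriction to finite-dimensional $\fH_1$ is not actually needed at this step.
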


\begin{proof}
Since $z \mapsto (\vB - z)^{-1}$ is analytic and because of
the first resolvent identity,
$M$ \nolinebreak is a matrix-valued Herglotz function, cf. \cite{Gesztesy.2000}. 
Note that also Example \ref{ex:Herglotz} is applicable to prove this.
The inverse of the Schur complement of
 $(\vB-z)$ shows equation \eqref{eq:Mzblock1-A}, see \cite[Proposition 1.6.2]{Tretter.2008}.
\end{proof}

The two propositions below explain a positive Borel measure which is equivalent to the spectral measure
of $\vB$
and describe the singularly continuous spectrum and the pure point spectrum of 
the perturbed operator $\vB$. This extends the results by Kostrykin and Makarov
in \cite{Kostrykin.2005}.

\begin{prop}\label{prop:Mmatrix-repre}
Assume Hypothesis \ref{hypo:problem}.
The Herglotz function
		\begin{equation*}
		m(z) = \trace \big( M(z)
		 \big)
				\label{eq:herglotz-m} 	
	\end{equation*}
admits the representation
	\begin{equation}
		m(z) = \int_{\RR} \frac{1}{t-z} \, d \omega(t)\, ,
				\label{eq:herglotz-m2} 	
				\,
	\end{equation}
where $\omega$ is a positive Borel measure with compact support.
Moreover, $\omega$ and the spectral measure
of $\vB$ are equivalent, \ie the null sets coincide.
\end{prop}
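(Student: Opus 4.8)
The plan is to apply Proposition \ref{prop:vsternv1} with the self-adjoint operator $T := \vB$ and the linear map $V := J_{\fH_1} : \fH_1 \to \fH$, where $\fH_1$ is identified with $\CC^n$ via a choice of orthonormal basis. First I would verify the hypotheses of that proposition: $\vB$ is bounded and self-adjoint by Hypothesis \ref{hypo:problem}, while Lemma \ref{lemma:MultispectrumB} shows that $\Ran J_{\fH_1} = \fH_1$ is a cyclic generating subspace for $\vB$ and that the multiplicity $p$ of the spectrum of $\vB$ satisfies $p \leq n = \dim \fH_1$. Hence Proposition \ref{prop:vsternv1} is directly applicable and yields the matrix-valued measure $\Omega(\Delta) := J_{\fH_1}^\ast \EE_{\vB}(\Delta) J_{\fH_1}$ representing $M(z) = J_{\fH_1}^\ast (\vB - z)^{-1} J_{\fH_1}$ (the function from Proposition \ref{prop:Mmatrix}), together with the equivalence of $\Omega$ and the spectral measure $\EE_{\vB}$.

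Next I would take traces. Since $\trace$ is a bounded linear functional on $\cL(\fH_1)$ and $\Omega$ is a finite matrix-valued measure, it commutes with the integral, so
\begin{equation*}
m(z) = \trace M(z) = \trace \int_{\RR} \frac{1}{t-z} \, d\Omega(t) = \int_{\RR} \frac{1}{t-z} \, d\omega(t),
\end{equation*}
where $\omega(\Delta) := \trace(\Omega(\Delta))$ is a positive Borel measure, positivity holding because $\Omega(\Delta) \geq 0$ for all $\Delta$. This establishes the representation \eqref{eq:herglotz-m2}.

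For the equivalence with the spectral measure of $\vB$, I would invoke Proposition \ref{prop:Omega-equiv-omega}, which states that $\omega = \trace \Omega$ and $\Omega$ have exactly the same null sets. Combining this with the equivalence of $\Omega$ and $\EE_{\vB}$ from the first step and using transitivity of the equivalence relation on measures gives that $\omega$ and $\EE_{\vB}$ are equivalent. Finally, the compactness of the support of $\omega$ follows immediately from the boundedness of $\vB$: the spectrum $\spec(\vB)$ is a compact subset of $\RR$, the spectral measure $\EE_{\vB}$ is supported there, and hence so are $\Omega$ and $\omega$.

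I do not expect a genuine obstacle here, since the statement is essentially a packaging of Proposition \ref{prop:vsternv1}, Lemma \ref{lemma:MultispectrumB}, and Proposition \ref{prop:Omega-equiv-omega}. The only points requiring a modicum of care are the identification of the inclusion $J_{\fH_1}$ with the operator $V$ of the abstract proposition (that is, the choice of basis realising $\fH_1 \cong \CC^n$) and the justification that the trace may be pulled inside the integral; both are routine.
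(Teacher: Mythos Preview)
Your proposal is correct and follows essentially the same route as the paper: define $\Omega(\Delta) = J_{\fH_1}^\ast \EE_{\vB}(\Delta) J_{\fH_1}$, use Lemma~\ref{lemma:MultispectrumB} together with Proposition~\ref{prop:vsternv1} to obtain the integral representation of $M$ and the equivalence $\Omega \sim \EE_{\vB}$, then set $\omega = \trace\Omega$ and invoke Proposition~\ref{prop:Omega-equiv-omega}. The only cosmetic difference is ordering---the paper first writes down $\Omega$ and its integral representation and then appeals to Proposition~\ref{prop:vsternv1} for the equivalence, whereas you invoke that proposition up front---and the paper adds a one-line reference to \cite[Theorem~5.4]{Gesztesy.2000} to note that $m$ is indeed a scalar Herglotz function.
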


\begin{proof}
 From \cite[Theorem 5.4]{Gesztesy.2000} we know that $m$ is a scalar Herglotz function.
 	We define
	an operator-valued measure $\Omega $ with values in $\cL ( \fH_1)$ 
	by
		\begin{equation*}
		\Omega(\Delta) := 
				J_{\fH_1}^\ast \EE_{\vB}(\Delta) J_{\fH_1}
		\end{equation*}
	for every Borel set $\Delta \subset \RR $.
	We easily see that
		\begin{equation*}
			\int \frac{d \Omega(t)}{t -z} = 
			J_{\fH_1}^\ast \int  \frac{d  \EE_{\vB}(t)}{t -z} \,  J_{\fH_1}
			= M(z) \quad \text{for all } z \in \CC_+
			\,.
		\end{equation*}
	Hence, $ \omega(\Delta) := \trace \Omega(\Delta) $ defines a positive measure 
	with compact support, which satisfies equation \eqref{eq:herglotz-m2}.
	
	Since by Lemma \ref{lemma:MultispectrumB} 
	the space $\fH_1$ is a cyclic generating subspace for $\vB$, the measure
	$\Omega$ is equivalent to $\EE_{\vB}$ by Proposition \ref{prop:vsternv1}. That $\omega$
	and $\Omega$ are equivalent has been shown in Proposition \ref{prop:Omega-equiv-omega}.
\end{proof}



\begin{prop}\label{prop:Mmatrix-support}
Assume Hypothesis \ref{hypo:problem}.
The set
	\begin{equation*}
		S_s := \bigg\{ \lambda \in \RR ~\Big|~
				 \normbig{ \big[ (A_1 - \lambda- \ii \eps) -V^{\ast} 
				 (A_0 - \lambda - \ii \eps)^{-1} V \big]^{-1}} 
				 \xrightarrow{\eps \rightarrow 0^+} \infty 
			    \bigg\}
	\end{equation*}
is a minimal support of the singular part of the positive measure $\omega$
from Proposition \nolinebreak \ref{prop:Mmatrix-repre}. The set 
	\begin{align*}
		S_{pp} := \bigg\{ \lambda \in \RR ~\Big|~&
		\text{There is } 0 \neq y \in \fH_1 \text{ with } V y \in \Ran(A_0 - \lambda)
		\text{ and there is } \\
		& x \in \Ran \EE_{A_0}(\{ \lambda \}) \text{ such that } \\
				 & (A_1 - \lambda) y = 
				 \limeps V^{\ast} (A_0 - \lambda - \ii \eps)^{-1} V y - V^{\ast} x 
			    \bigg\}
	\end{align*}
is the set of all atoms of $\omega$. In particular, $S_{sc} := S_s \setminus S_{pp}$ is a minimal support for the singularly continuous part of $\omega$.
\end{prop}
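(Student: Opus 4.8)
The plan is to read off all three assertions from the abstract support theorem, Proposition \ref{prop:support-M-Her}, once it is fed the concrete Herglotz matrix $M$ of Proposition \ref{prop:Mmatrix}; recall that by equation \eqref{eq:Mzblock1-A} the defining condition of $S_s$ is exactly $\normbig{M(\lambda+\ii\eps)} \to \infty$. Throughout I would work with the scalar Herglotz function $m(z) = \trace M(z)$ of Proposition \ref{prop:Mmatrix-repre}, whose representing measure is $\omega$ and which satisfies $\trace \im M(z) = \im m(z)$; this lets me apply the scalar support theorem directly to $\omega$.

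I would dispatch the pure point part first, as it is the most direct. By Proposition \ref{prop:Mmatrix-repre} the measure $\omega$ is equivalent to the spectral measure $\EE_{\vB}$, so $\lambda$ is an atom of $\omega$ exactly when $\EE_{\vB}(\{\lambda\}) \neq 0$, \ie when $\lambda$ is an eigenvalue of $\vB$. It then remains to match $\spec_p(\vB)$ with the set $S_{pp}$, and this is precisely the content of Lemma \ref{lemma:eigenvaluesA}: as $x$ ranges over $\Ran \EE_{A_0}(\{\lambda\})$, the defining equation of $S_{pp}$ collapses to case \eqref{item:a} when $\lambda \notin \spec_p(A_0)$ (where $x=0$ is forced and $Vy \in \Ran(A_0-\lambda)$ turns the limit into $V^\ast(A_0-\lambda)^{-1}Vy$), to case \eqref{item:b} when $\lambda \in \spec_p(A_0)$ and $x=0$, and to case \eqref{item:lemma:worstcase} when $x \neq 0$. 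So I expect $S_{pp}$ to be exactly the eigenvalue set, hence the set of atoms of $\omega$.

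For the singular part the natural comparison set is $S_{\Omega,s} := \{ \lambda \mid \limeps \im m(\lambda+\ii\eps) = \infty \}$, which Proposition \ref{prop:support-M-Her} already certifies as a minimal support of $\omega_s$. The easy inclusion $S_{\Omega,s} \subseteq S_s$ I would obtain from $\normbig{M(z)} \geq \normbig{\im M(z)} \geq \tfrac{1}{n}\trace \im M(z)$, valid because $\im M(z) \geq 0$; this already makes $S_s$ a support of $\omega_s$. The hard part is the reverse direction, because the norm of $M$ could in principle blow up while its imaginary part, and hence $\im m$, stays bounded, so $S_s$ and $S_{\Omega,s}$ need not coincide pointwise. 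Rather than force pointwise equality I would argue up to Lebesgue-null sets: each entry of $M$ is the Cauchy transform of a finite complex measure, so $\limeps M(\lambda+\ii\eps)$ exists and is finite for Lebesgue-almost every $\lambda$ (the a.e. boundary-value theorem for Herglotz functions, cf. \cite{Gesztesy.2000}). At every such $\lambda$ the norm stays bounded, so $S_s$ is Lebesgue-null, whence $S_s \setminus S_{\Omega,s}$ is Lebesgue-null as well. A support that differs from a minimal support by a Lebesgue-null set is again minimal — which I would check by intersecting any smaller support $T \subseteq S_s$ with $S_{\Omega,s}$ and invoking minimality of $S_{\Omega,s}$ — and this yields that $S_s$ is a minimal support of $\omega_s$.

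The singularly continuous claim I would then read off from the decomposition $\omega_s = \omega_{sc} + \omega_{pp}$. Since $S_{pp}$ is the countable set of atoms, it is Lebesgue-null and a null set for the non-atomic measure $\omega_{sc}$, and it is contained in $S_s$ because atoms of $\omega_s$ lie in every support of $\omega_s$. This makes $S_{sc} = S_s \setminus S_{pp}$ a support of $\omega_{sc}$, and for minimality I would take any support $T \subseteq S_{sc}$ of $\omega_{sc}$, observe that $T \cup S_{pp} \subseteq S_s$ supports $\omega_s$, apply minimality of $S_s$, and finally discard the Lebesgue-null set $S_{pp}$. I expect the only genuine difficulty in the whole argument to be the boundary-value input used for the singular part; everything else is bookkeeping with supports together with the measure equivalences already established.
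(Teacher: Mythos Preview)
Your proposal is correct and follows essentially the same route as the paper: both identify $S_{pp}$ with $\spec_p(\vB)$ via Lemma \ref{lemma:eigenvaluesA} and the measure equivalence of Proposition \ref{prop:Mmatrix-repre}, and both handle $S_s$ by first noting $S_{\Omega,s} \subset S_s$ and then invoking the a.e.\ existence of finite boundary values of $M$ for minimality (the paper simply cites \cite[Theorem 5.4 (ii)]{Gesztesy.2000} for this step, whereas you spell out its content). Your treatment of $S_{sc}$ is more explicit than the paper's one-line ``in particular'', but the underlying argument is the same.
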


\begin{proof}
 By \cite[Theorem 6.1]{Gesztesy.2000}, which is formulated in Proposition \ref{prop:support-M-Her},
 there is a minimal support of $\Omega_s$:
  	\begin{equation*}
 	S_{\Omega, s} := \Big\{ \lambda \in \RR  ~\Big|~
							\lim_{\eps \rightarrow 0^+} \trace\, \im M(\lambda + \ii \eps) = \infty
							\Big\} \,.
 	\end{equation*}
 Of course this is by the equivalence of the measures, see Proposition \ref{prop:Omega-equiv-omega}, also a minimal support for $\omega_s$. Obviously, $S_s \supset S_{\Omega, s}$ and therefore $S_s$ is a support of $\omega_s$ as well. It it is minimal by \cite[Theorem 5.4 (ii)]{Gesztesy.2000}.
 	
 The set $S_{pp}$ coincides with
 all eigenvalues of $\vB$. Note that we pushed the three cases of Lemma \ref{lemma:eigenvaluesA}
 into one formula here. By the equivalence of measures, $S_{pp}$  is the set of all atoms of $\omega$ and therefore the smallest support of $\omega_{pp}$.
\end{proof}

\begin{remark}
 The sets $S_s$ and $S_{pp}$ are connected to the spectrum of the perturbed operator $\vB$. We already noted that $S_{pp} = \spec_{p}(\vB)$ but the relation to $S_s$ is more subtle.
 In general neither $S_s \supset \spec_{s}(\vB)$ nor $S_s \subset \spec_{s}(\vB)$ is correct. However,
 $\overline{S_s} \supset \spec_{s}(\vB)$ is always true. Hence,
 if the singular spectrum of $\vB$ is non-empty, than $S_s$ is also non-empty.
\end{remark}

Now, we define subsets $K_{pp} \subset S_{pp}$ and $K_{sc} \subset S_{sc}$ of these supports of $\omega$, since not all points are suitable for the construction of a solution to the Riccati equation
as one can see in next section. The suitable subsets are given by:
	\begin{align*}
	 K_{pp} &:= \bigg\{ \lambda \in \RR ~ \Big|
 				\text{ There is } 0 \neq y \in \fH_1 \text{ with } \\
 				& \qquad \qquad (A_1 - \lambda) y = 
 				\limeps V^\ast (A_0 - \lambda - \ii \eps)^{-1} V y \\
 				&	\qquad\qquad	 ~ \text{ and }	\int \frac{1}{\abs{t - \lambda}^2} 
 				\, d \sca{Vy, \EE_{A_0}(t) Vy}	< \infty				\bigg\} \,
	\end{align*}
	and
	\begin{align*}
	K_{sc} &:= \bigg\{ \lambda \in \RR ~ \Big|
 				\text{ There is } 0 \neq y \in \fH_1 \text{ with } \\
 				& \qquad \qquad (A_1 - \lambda) y = 
 				\limeps V^\ast (A_0 - \lambda - \ii \eps)^{-1} V y \\
 				&	\qquad\qquad	 ~ \text{ and }	\int \frac{1}{\abs{t - \lambda}^2} 
 				\, d \sca{Vy, \EE_{A_0}(t) Vy}	= \infty				\bigg\} \,.
	\end{align*}
Note that $K_{pp} = S_{pp}$ if and only if there is no eigenvalue of $\vB$ which satisfies
the condition \textit{(\ref{item:lemma:worstcase})} of Lemma \ref{lemma:eigenvaluesA}. In particular, $K_{pp} = S_{pp}$ is fulfilled if
the point spectra of $A_0$ and $\vB$ are disjoint.

Furthermore, Kostrykin and Makarov have shown in \cite[Theorem 3.4]{Kostrykin.2005} that 
\linebreak
$K_{pp} =S_{pp}$
and $K_{sc} = S_{sc}$ hold if the Hilbert space $\fH_1$ is one-dimensional.
By using this result, they have constructed solutions to the Riccati equation for each $\lambda
\in S_s$ in the case that $\dim \fH_1 = 1$, see \cite[Theorem 4.3]{Kostrykin.2005}.
 In the following
section we extend their results about solutions to the Riccati equation for an arbitrarily
finite dimensional Hilbert space $\fH_1$.

\section{Solutions to the Riccati equation}

The operator Riccati equation \eqref{eq:ric10} a priori only makes sense
as an operator identity if
the solution $X$ is bounded and $\dom(X) = \fH_0$.
If one wants to include unbounded operators, a
generalised definition of solutions is required.
 We will use the same notion of a so-called strong solution as in \cite{Kostrykin.2005} and \cite{Kostrykin.2003}.

\begin{definition} \label{def:strongric}
	A densely defined, not necessarily bounded or closable, linear operator
		$X: \fH_0 \supset \Dom(X) \rightarrow \fH_1$ 
	is called a \textit{strong solution} to the Riccati equation \nolinebreak \eqref{eq:ric10} if
		\begin{equation*}
		\Ran(A_0 + V X)|_{\Dom(X)} \subset \Dom{(X)}
		\end{equation*}
	and
		\begin{equation*}
		A_1 X x - X (A_0 + V X) x  +  V^\ast x = 0 \quad \text{for all}~ x \in \Dom(X)
		\end{equation*}
	hold.
\end{definition}

\begin{hypo} \label{hypo:Lambda}
Assume Hypothesis \ref{hypo:problem}. Suppose that $K_{pp} \cup K_{sc}$ is not empty and that there are $n:= \dim \fH_1 $ linear independent
vectors $y_1, \dots, y_n \in \fH_1$ which satisfy
	\begin{equation} \label{eq:importantone}
		(A_1 - \lambda_k) y_k = 
 			\limeps V^\ast (A_0 - \lambda_k - \ii \eps)^{-1} V y_k \, 
 			,\qquad
 			\lambda_k \in K_{pp} \cup K_{sc} \,
	\end{equation}
for $k =1, \dots, n$.
Denote
$\Lambda := \{ (y_1, \lambda_1), \dots, (y_n, \lambda_n) \} $.
\end{hypo}

Under Hypthesis \ref{hypo:Lambda}, we define for $k=1, \dots, n$ the, not necessarily
orthogonal, projections $P_{\Lambda,k} : \fH_1 \rightarrow \fH_1$ by
  	\begin{align*}
	 &\Ran P_{\Lambda,k} = \linspan \{y_k\}~,~~ \\
	&\Kern P_{\Lambda,k} = 
		\linspan \{y_j ~|~ j \neq k \}
		\, .
	\end{align*}

We also define a possibly unbounded operator
$X_{\Lambda}: \fH_0 \supset  \dom(X_{\Lambda}) \rightarrow \fH_1$
on the domain
	\begin{equation*}\dom(X_{\Lambda}):= 
         \Bigg\{ x \in \fH_0  ~\bigg|~
			\lim_{\eps \rightarrow 0^+}
			\sum_{j=1}^n P_{\Lambda,j}^\ast V^\ast 
			(A_0 - \lambda_j + \ii \eps)^{-1} 
					x \in  \fH_1
				\Bigg\}
	\end{equation*}
by
	\begin{equation} 
		X_{\Lambda} x = \lim_{\eps \rightarrow 0^+}
		\sum_{j=1}^n P_{\Lambda,j}^\ast
				V^\ast (A_0 - \lambda_j + \ii \eps)^{-1}
			 x
			 \label{eq:XLAMBDA1}
	\, ,
	\end{equation}
which has the following properties.

\begin{prop} \label{prop:solutions}
 Assume Hypothesis \ref{hypo:Lambda} with a chosen $\Lambda$. Then:
 	\begin{enumerate}[(i)]
 		\item The linear operator $X_{\Lambda}$ is densely defined.
 		\item If $\lambda_j \in K_{sc}$ for at least one $j$, then
 		the operator $X_{\Lambda}$ is unbounded and non-closable.
 		\item If $\{ \lambda_1, \dots, \lambda_n \} \subset K_{pp} $, then the operator
 		$X_{\Lambda}$ is bounded.
 		\item  $A_0 x \in \dom(X_{\Lambda})$ for all 
 				$x \in \dom(X_{\Lambda})$.
 		\item $X_{\Lambda}$ is a strong solution to the Riccati equation \eqref{eq:ric10}.
 	\end{enumerate}
\end{prop}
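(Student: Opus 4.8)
\emph{Proof proposal.} The plan is to reduce every assertion to the scalar coefficient functions obtained by expanding the definition of $X_\Lambda$ in the dual basis. Write $\{y_j^\ast\}$ for the basis of $\fH_1$ dual to $\{y_j\}$, so that $P_{\Lambda,j}^\ast$ is the associated rank-one coordinate projection, and set
\[
	w_j^\eps := (A_0 - \lambda_j - \ii\eps)^{-1} V y_j \,, \qquad
	c_j^\eps(x) := \sca{V y_j, (A_0 - \lambda_j + \ii\eps)^{-1} x} = \sca{w_j^\eps, x} \,.
\]
Then $X_\Lambda x = \limeps \sum_j c_j^\eps(x)\, y_j^\ast$, and because $\{y_j^\ast\}$ is a basis of the finite-dimensional space $\fH_1$, the vector limit exists iff each scalar limit $c_j(x) := \limeps c_j^\eps(x)$ does; hence $\dom(X_\Lambda) = \{x : c_j(x)\text{ exists for all }j\}$, characterised slot by slot. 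The one spectral fact I would record first, via the spectral theorem applied to $\mu_j := \sca{V y_j, \EE_{A_0}(\cdot)\, V y_j}$, is the dichotomy: $w_j^\eps$ converges in $\fH_0$ iff $\int |t-\lambda_j|^{-2}\,d\mu_j < \infty$, that is iff $\lambda_j \in K_{pp}$, and otherwise $\|w_j^\eps\| \to \infty$; moreover $\EE_{A_0}(\{\lambda_j\})\, V y_j = 0$ for each $j$, since $\lambda_j \in K_{pp}\cup K_{sc}$ forces the limit in Hypothesis \ref{hypo:Lambda} to be finite (cf.\ the remark after Lemma \ref{lemma:eigenvaluesA}).

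Granting this, (iii) and (i) are quick. For (iii), if all $\lambda_j \in K_{pp}$ then each $w_j := \lim_\eps w_j^\eps$ exists, $c_j(x) = \sca{w_j, x}$ is defined for every $x \in \fH_0$, and $X_\Lambda = \sum_j \sca{w_j, \cdot}\, y_j^\ast$ is everywhere defined of finite rank, hence bounded. For (i) I would show that $\dom(X_\Lambda)$ contains both the eigenvectors of $A_0$ and every vector spectrally supported away from $\{\lambda_1,\dots,\lambda_n\}$: the former case uses $\EE_{A_0}(\{\lambda_j\})\, V y_j = 0$ to annihilate the otherwise singular $\tfrac{1}{\ii\eps}$-contribution, the latter uses norm-convergence of the resolvents on the relevant spectral subspace. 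These together span a dense subspace of $\fH_0 = \Ran \EE_{A_0}(\{\lambda_1,\dots,\lambda_n\}) \oplus \Ran \EE_{A_0}(\RR\setminus\{\lambda_1,\dots,\lambda_n\})$, so $\dom(X_\Lambda)$ is dense. Part (iv) is the resolvent identity $(A_0 - \lambda_j + \ii\eps)^{-1} A_0 = I + (\lambda_j - \ii\eps)(A_0 - \lambda_j + \ii\eps)^{-1}$, giving $c_j^\eps(A_0 x) = \sca{V y_j, x} + (\lambda_j - \ii\eps)\, c_j^\eps(x)$; the right-hand side converges whenever $x \in \dom(X_\Lambda)$, so $A_0 x \in \dom(X_\Lambda)$ and $c_j(A_0 x) = \sca{V y_j, x} + \lambda_j c_j(x)$.

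For (ii), assume $\lambda_{j_0} \in K_{sc}$, so $\|w_{j_0}^\eps\| \to \infty$. I would test with $x_\delta := \EE_{A_0}(J_\delta)(A_0 - \lambda_{j_0})^{-1} V y_{j_0}$, where $J_\delta$ is a fixed small punctured neighbourhood of $\lambda_{j_0}$ with the $\delta$-balls around every $\lambda_k$ removed; then $x_\delta \in \dom(X_\Lambda)$, while $c_{j_0}(x_\delta) = \|x_\delta\|^2 = \int_{J_\delta} |t-\lambda_{j_0}|^{-2}\,d\mu_{j_0} \to \infty$ as $\delta \to 0^+$. Boundedness of the coordinate functionals in the fixed basis $\{y_j^\ast\}$ gives $\|X_\Lambda x_\delta\| \gtrsim |c_{j_0}(x_\delta)| = \|x_\delta\|^2$, hence $\|X_\Lambda x_\delta\|/\|x_\delta\| \to \infty$ and $X_\Lambda$ is unbounded. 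Non-closability then follows from finite-dimensionality of the target: normalising $u_m$ with $\|u_m\|=1$, $\|X_\Lambda u_m\|\to\infty$, the sequence $x_m := u_m/\|X_\Lambda u_m\| \to 0$ has $\|X_\Lambda x_m\|=1$, and compactness of the unit sphere in $\fH_1$ yields a subsequence with $(x_m, X_\Lambda x_m) \to (0,\eta)$, $\eta \neq 0$.

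The hard part is (v), and I would prove it by testing the Riccati expression against the spanning set $\{y_k\}$. Using (iv) for $X_\Lambda A_0 x$, and $A_1 y_k = \lambda_k y_k + u_k$ with $u_k := (A_1 - \lambda_k) y_k = \limeps V^\ast w_k^\eps$ from Hypothesis \ref{hypo:Lambda}, the linear part $A_1 X_\Lambda x - X_\Lambda A_0 x + V^\ast x$ collapses, in the $k$-th slot, to $\lambda_k c_k(x) + \sum_j c_j(x)\sca{u_k, y_j^\ast}$. The main obstacle is the nonlinear term $X_\Lambda V X_\Lambda x$, which must be given meaning on $\dom(X_\Lambda)$ even when $X_\Lambda$ is non-closable. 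Here I would show that each $V y_j^\ast \in \dom(X_\Lambda)$ with $X_\Lambda V y_j^\ast = \sum_k \sca{u_k, y_j^\ast}\, y_k^\ast$, the point being that $c_k(V y_j^\ast) = \limeps \sca{V^\ast w_k^\eps, y_j^\ast}$ converges because $V^\ast w_k^\eps$ converges by Hypothesis \ref{hypo:Lambda}; this is precisely where the $K_{sc}$-case survives, since it is $V^\ast w_k^\eps$, and not $w_k^\eps$ itself, that has a limit. Consequently $V X_\Lambda x \in \dom(X_\Lambda)$, which together with (iv) establishes $(A_0 + V X_\Lambda)x \in \dom(X_\Lambda)$, and the $k$-th slot of $X_\Lambda V X_\Lambda x$ equals $\sum_j c_j(x)\sca{u_k, y_j^\ast}$, cancelling the matching sum above while the $\lambda_k c_k(x)$ terms cancel among themselves, leaving $0$. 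I would verify this cancellation first in the rank-one case, recovering \cite{Kostrykin.2005}, before committing to the indexed computation.
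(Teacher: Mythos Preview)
Your proposal is correct and follows essentially the same slot-by-slot strategy as the paper: the paper's $P_{\Lambda,k}^\ast$ is precisely your rank-one coordinate map $z\mapsto\sca{y_k,z}\,y_k^\ast$, and parts (iii)--(v) match the paper almost line for line (the paper's key step in (v) is the identity $P_{\Lambda,k}^\ast X_\Lambda V = P_{\Lambda,k}^\ast(A_1-\lambda_k)$ on $\fH_1$, which is your computation $c_k(Vy_j^\ast)=\sca{u_k,y_j^\ast}$ read via the adjoint).

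The only notable variations are in (i) and (ii). For (i) the paper exploits the cyclicity hypothesis directly, showing $\{p(A_0)u : u\in\Ran V,\ p\ \text{polynomial}\}\subset\dom(X_\Lambda)$; your spectral-support argument is equally valid and in fact does not use cyclicity at all. For (ii) the paper argues by contradiction via the uniform boundedness principle---if $X_\Lambda$ were bounded then $\sup_{\eps}\|P_{\Lambda,j_0}^\ast V^\ast(A_0-\lambda_{j_0}+\ii\eps)^{-1}\|<\infty$, forcing $\int|t-\lambda_{j_0}|^{-2}\,d\mu_{j_0}<\infty$---rather than constructing explicit test vectors as you do; the paper then leaves non-closability implicit in the observation (used in its proof of (iii)) that a densely defined closable operator into a finite-dimensional target is automatically bounded, which is exactly what your compactness argument reproves. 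One small cosmetic point: in your sketch of (v) the ``$\lambda_k c_k(x)$'' term already cancels inside the linear part (between $A_1X_\Lambda x$ and $X_\Lambda A_0 x$), so the surviving linear contribution is just $\sum_j c_j(x)\sca{u_k,y_j^\ast}$, which then cancels the nonlinear term; the computation is correct, only the bookkeeping description is slightly off.
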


\begin{proof}
 A proof of the statement (i) for the case $\dim \fH_1 = 1$
 can be found in \cite{Kostrykin.2005}. This
 proof has a straightforward generalisation to
 a finite dimensional $\fH_1$. With the same argument as in \cite[Lemma 4.1]{Kostrykin.2005} 
 one can show that the limit 
 	\begin{equation*}
 \lim_{\eps \rightarrow 0^+}
  P_{\Lambda,j}^\ast  V^\ast (A_0 - \lambda_j + i \eps)^{-1} \varphi
 \end{equation*}
  exists
 for $j \in \{1, \ldots, n\}$ and $\varphi \in \{p(A_0) u ~|~ p \text{ polynomial}, u \in \Ran V \}$.
 Since the latter set is dense in $\fH_0$, the operator $X_{\Lambda}$ is densely defined.
 
 To show (ii) we choose $\lambda_j \in K_{sc}$ and define for all $\eps \in (0,1]$
 the bounded operators $Y^{\eps} \in \cL(\fH_0, \fH_1)$ by
 	\begin{equation*}
 		Y^{\eps} x := P_{\Lambda,j}^\ast
 			V^\ast (A_0 - \lambda_j + i \eps)^{-1} x \,.
 	\end{equation*}
 A short calculation with the spectral theorem shows that the operator norm is given by
 	 	\begin{equation}
 	 \normbig{Y^{\eps}}_{\fH_0 \rightarrow \fH_1} = \abs{\alpha_j} 
 			\bigg( \int \frac{1}{ \abs{t - \lambda_j}^2 + \eps^2} \, 
 			d \sca{V y_{j}, \EE_{A_0}(t) V y_{j} } \bigg)^{1/2}
 				\label{eq:proofdimnbanachsteinhaus1}
 	\end{equation}
 where $\alpha_j \in \CC$ is a constant independent of $\eps$. 
 If $X_{\Lambda}$ was bounded, the operator defined by $Y := P_{\Lambda, j}^\ast X_{\Lambda}$ would also be bounded and therefore
   	 \begin{equation*}
	 	 \sup_{\eps \in (0,1]} \normbig{Y^{\eps} x}_{\fH_1} < \infty ~~\text{ for all } x \in \fH_0
 		\,.
 	\end{equation*}
Since the 
uniform boundedness principle claims that $\sup_{\eps \in (0,1]} \normbig{Y^{\eps}}_{\fH_0 \rightarrow \fH_1}$ is finite and since that can be written by equation \eqref{eq:proofdimnbanachsteinhaus1} and the monotone convergence theorem
as
   	 \begin{equation*}
	 	 \int \frac{1}{ \abs{t - \lambda_j}^2} \, 
 			d \sca{V y_{\lambda_j}, \EE_{A_0}(t) V y_{\lambda_j} }  < \infty
	 	 \,,
 	\end{equation*}
there is a contradiction to $\lambda_j \in K_{sc}$.

 To show (iii), assume that $\{ \lambda_1, \dots, \lambda_n \} \subset K_{pp} $
 and define a bounded operator $Z: \fH_1 \rightarrow \fH_0$ by
 	\begin{equation*}
 		Z y := \wlim_{\eps \rightarrow 0^+} \sum_{j=1}^n 
					(A_0 - \lambda_j - \ii \eps)^{-1}  V P_{\Lambda,j} y ~,\quad y \in \fH_1
						\label{eq:adjointXLA1}\,.
 	\end{equation*}
 Since all $\lambda_j$ are eigenvalues of $\vB$ and $V P_{\Lambda,j} y \in \Ran(A_0 -\lambda_j) $
 for all $j =1, \dots, n$ and $y \in \fH_1$ by
 Lemma \ref{lemma:eigenvaluesA}, the weak limit is well-defined. 
 Choose $x \in \Dom(X_{\Lambda})$ and $y \in \fH_1$. Then
 	\begin{align*}
		\sca{x, Z y}_{\fH_0}
				&= \limeps \Big< x \, , \,   \sum_{j=1}^n 
					(A_0 - \lambda_j - \ii \eps)^{-1}  V P_{\Lambda,j} y\Big>_{\fH_0} \\
				&=  \limeps \Big< \sum_{j=1}^n  P_{\Lambda,j}^\ast
				   V^\ast (A_0 - \lambda_j + \ii \eps)^{-1}  x  \, , \,   
					 y\Big>_{\fH_1} 
				 =  \sca{ X_{\Lambda} x , y}_{\fH_1}\,,
 	\end{align*}
 so that $Z^\ast$ is an extension of $X_{\Lambda}$. Hence, $X_{\Lambda}$ is a
 closable operator of finite rank and therefore has to be bounded.
 
 Statement (iv) is shown by applying the spectral theorem. For each $j$ and all 
 $x \in \Dom(X_{\Lambda})$ one has
 	\begin{equation}
 	 \limeps P_{\Lambda,j}^\ast 
 	 V^\ast (A_0 - \lambda_j + \ii \eps)^{-1} (A_0 - \lambda_j) x = P_{\Lambda,j}^\ast V^\ast x 
 	 	\label{eq:limitfinite}
	\end{equation} 	 
 because $\fH_1$ is finite dimensional and 
 $\Ran( V P_{\Lambda,j} ) \subset \left( \Ran \EE_{A_0}(\{\lambda_j \}) \right)^{\perp} $.
 Therefore, we have 																
 $ A_0 x \in \Dom(X_{\Lambda})$ for all $x \in \Dom(X_{\Lambda})$.

 To show (v), we write the Riccati equation \eqref{eq:ric10} in the form
 		\begin{equation*}
 		\sum_{j=1}^n P_{\Lambda,j}^\ast(A_1 X - X A_0 - X V X + V^\ast) = 0 \,.
		\end{equation*}
 We choose $x \in \dom(X_{\Lambda})$ and calculate by using \eqref{eq:importantone}
 and \eqref{eq:XLAMBDA1}:
 \begin{align*}
		& P_{\Lambda,k}^\ast (A_1 X_{\Lambda} - X_{\Lambda} A_0 
		- X_{\Lambda} V X_{\Lambda})x \\[1ex]
		& 
		=
		P_{\Lambda,k}^\ast \Big( A_1 X_{\Lambda} x
		-  X_{\Lambda} A_0 x
			-\limeps \big(P_{\Lambda,k}\big)^\ast V^\ast 
						(A_0 - \lambda_k + \ii \eps)^{-1}V X_{\Lambda} x \Big)\displaybreak[0]\\[1ex]
		& =
		 P_{\Lambda,k}^\ast (A_1 -(A_1 - \lambda_k) ) X_{\Lambda}x
		-  P_{\Lambda,k}^\ast X_{\Lambda} A_0 x \displaybreak[0]\\[1ex]
		&= P_{\Lambda,k}^\ast X_{\Lambda} (\lambda_k - A_0)x \displaybreak[0]\\[1ex]
		&= \limeps P_{\Lambda,k}^\ast 	V^\ast 
						(A_0 - \lambda_k - \ii \eps)^{-1}	(\lambda_k - A_0) x\displaybreak[0]\\[1ex]
		&= -P_{\Lambda,k}^\ast  V^\ast x
		\,.
\end{align*}
In the last step we used equation \eqref{eq:limitfinite}.
\end{proof}

Finally, we are able to prove our main results:
\begin{proof}[Proof of Theorem \ref{theo:main}]
By Lemma \ref{lemma:MultispectrumB} the multiplicity of the spectrum of $\vB$ is
at most $n:= \dim \fH_1$. If there is an eigenvalue $\lambda$
with multiplicity $n$, then Lemma \nolinebreak \ref{lemma:eigenvaluesA}
shows that there are vectors $y_1, \dots, y_n \in \fH_1$ which span the Hilbert space $\fH_1$. 
Thus, also by Lemma \ref{lemma:eigenvaluesA} the inequality 
	\begin{equation*}
		\limeps \abs{ \trace V^\ast (A_0 - \lambda - \ii \eps)^{-1} V} < \infty
	\end{equation*}
holds
and one concludes that $\lambda \notin \spec_p(A_0)$. This is due to
Proposition \ref{prop:vsternv1}, in particular equation \eqref{eq:pp-spectrum-Herglotz}, and the fact that $\Ran V$ is a cyclic generating subspace for $A_0$.
Eventually, we construct a bounded solution $X_{\Lambda}$ to the Riccati equation with 
$\Lambda = \{ (y_{1}, \lambda), \dots, (y_{n}, \lambda) \} $ and Proposition  \ref{prop:solutions}.
This proves (i).

Statement (ii) is 
formulated in such a way that there exists at least one $\Lambda$ 
as in Hypothesis \ref{hypo:Lambda} such that 
Proposition \nolinebreak \ref{prop:solutions} is applicable.
\end{proof}

\begin{proof}[Proof of Corollary \ref{cor:main}]
    Since here it is not assumed that $\Ran V$ is a cyclic generating subspace for $A_0$,
    we define
		\begin{equation*}
		\mathfrak{K}_0 := \overline{\linspan \Big\{ A_0^k v ~\big|~ k \in \NN_0,
		\, v \in \Ran V \Big\}}
		\, ,
		\end{equation*}
	which is always a closed $A_0$-invariant subspace of $\fH_0$. One can choose $X|_{{\mathfrak{K}_0}^{\perp}} = 0$ for a solution $X$ to the Riccati equation \eqref{eq:ric10}, so that
	we can assume Hypothesis \ref{hypo:problem} without loss of generality.
	
	As $\fH$ is finite dimensional and spanned by the eigenvectors of $\vB$,
	we always find a bounded solution $X$ by
	Theorem \ref{theo:main} part \eqref{item:theo:main:(b)}.
\end{proof}

\section*{Acknowledgement}
This paper is based on the author’s Master’s thesis 
”Die Riccati-Gleichung unter Störungen endlichen Ranges”,
supervised by
Vadim Kostrykin at the University of Mainz.
The author would like to thank him for the introduction
to this field of research and many discussions about the Riccati equation.
Moreover, the author would also like to express his
gratitude to Albrecht Seelmann, Stephan Schmitz 
and Christoph Uebersohn for useful remarks.


\end{document}